\DeclareMathOperator{\Age}{Age}
\newcommand{\concat}{%
  \mathord{
    \mathchoice
    {\raisebox{1ex}{\scalebox{.7}{$\frown$}}}
    {\raisebox{1ex}{\scalebox{.7}{$\frown$}}}
    {\raisebox{.7ex}{\scalebox{.5}{$\frown$}}}
    {\raisebox{.7ex}{\scalebox{.5}{$\frown$}}}
  }
}
\begin{document}

\title{Rainbow Ramsey Simple Structures}

\author[N. Dobrinen]{Natasha Dobrinen}
\thanks{The first author was supported by National Science Foundation Grant  DMS-1301665} 
\address{N. Dobrinen: Department of Mathematics\\
University of Denver \\
2280 S. Vine St.\\ Denver, CO \ 80208 U.S.A.}
\email{natasha.dobrinen@du.edu}
\urladdr{http://web.cs.du.edu/~ndobrine}

\author[C. Laflamme] {Claude Laflamme}
\thanks{The second author was supported by NSERC
of Canada Grant \# 690404}
\address{C. Laflamme: University of Calgary, Department of Mathematics and Statistics, Calgary, Alberta, Canada T2N 1N4} 
\email {laf@math.ucalgary.ca} 

\author [N. Sauer]{Norbert Sauer}
\thanks{The third author was supported by NSERC of
Canada Grant \# 691325} 
\address{N. Sauer: University of Calgary, Department of
Mathematics and Statistics, Calgary, Alberta, Canada T2N 1N4} \email{
nsauer@math.ucalgary.ca} 

\keywords{Partition theory, homogeneous relational structures, rainbow Ramsey, bounded colourings.}
\subjclass[2000]{03C99, 05C55, 05C17, 03C13, 03C15}

\begin{abstract}

A relational structure $\mathrm{R}$ is {\em rainbow Ramsey} if for
every finite induced substructure $\mathrm{C}$ of $\mathrm{R}$ and
every colouring of the copies of $\mathrm{C}$ with countably many
colours, such that each colour is used at most $k$ times for a fixed
$k$, there exists a copy $\mathrm{R}^\ast$ of $\mathrm{R}$ so that the copies 
of $\mathrm{C}$ in $\mathrm{R^\ast}$ use each colour at most once.

We show that certain ultrahomogenous binary relational structures, for
example the Rado graph, are rainbow Ramsey. Via compactness this then
implies that for all finite graphs $\mathrm{B}$ and $\mathrm{C}$ and
$k \in \omega$, there exists a graph $\mathrm{A}$ so that for every
 colouring of the copies of $\mathrm{C}$ in $\mathrm{A}$ such that
each colour is used at most $k$ times, there exists a copy
$\mathrm{B}^\ast$ of $\mathrm{B}$ in $\mathrm{A}$ so that the copies
of $\mathrm{C}$ in $\mathrm{B^\ast}$ use each colour at most once.

\end{abstract}

\maketitle

\newcommand{\mnl} {\medskip\noindent}
\newcommand{\bqed} {\ \ \ \vrule height5pt width4pt depth3pt}
\newcommand{\sqed} {\ \ \ \vrule height3pt width2pt depth1pt}
\newcommand{\snl}{\smallskip\noindent}
\newcommand{\bnl}{\bigskip\noindent}
\newcommand{\nl}{\hfil\break}
\newcommand{\Forb}{\mathrm{Forb}}
\newcommand{\sforb}{\mathrm{SForb}}
\newcommand{\Up}{\mathrm{Up}}
\newcommand{\age}{{\rm{age}}}
\newcommand{\rem}{\rm{rm}}
\newcommand{\univ}{\rm{univ}}
\newcommand{\skel}{\rm{skel}}
\newcommand{\tskel}{\rm{tskel}}
\newcommand{\rskel}{\mathrm{rskel}}
\newcommand{\struct}{\mathrm{struct}}
\newcommand{\orb}{\mathbf{orb}}
\newcommand{\agree}{\mathrm{agree}}
\newcommand{\domain}{\mathrm{domain}}
\newcommand{\bound}{{\mathrm{bound}}}
\newcommand{\replace}{\mathrm{replace}}
\newcommand{\cut}{\mathrm{cut}}
\newcommand{\rmpairs}{\mathrm{rmpairs}}
\newcommand{\rmsingles}{\mathrm{rmsingles}}
\newcommand{\Bound}{\mathrm{Bound}}
\newcommand{\cont}{{\mathrm{cont}}}
\newcommand{\tourn}{\mathrm{tourn}}
\newcommand{\ord}{\mathrm{ord}}
\newcommand{\broken}{\mathrm{broken}}
\newcommand{\join}{\mathrm{join}}
\newcommand{\D}{{\mathrm D}}
\newcommand{\pprime}{{\prime\prime}}
\newcommand{\ppprime}{{\prime\prime\prime}}
\newcommand{\closure}{\mathrm{closure}}
\newcommand{\G}{{\mathrm G}}
\newcommand{\Ha}{{\mathrm H}}
\newcommand{\Ht}{\Ha_{\mathcal T}}
\newcommand{\A}{{\mathrm A}}
\newcommand{\F}{{\mathrm F}}
\newcommand{\Bg}{{\mathrm B}}
\newcommand{\K}{{\mathrm K}}
\newcommand{\T}{{\mathrm T}}
\newcommand{\Sa}{{\mathrm S}}
\newcommand{\aut}{\mathrm{aut}}
\newcommand{\J}{{\mathrm J}} 
\newcommand{\Q}{{\mathrm Q}}
\newcommand{\Pa}{{\mathrm P}}
\newcommand{\X}{{\mathrm X}}
\newcommand{\C}{{\mathrm C}}
\newcommand{\Y}{{\mathrm Y}}
\newcommand{\U}{{\mathrm U}}
\newcommand{\V}{{\mathrm V}}
\newcommand{\W}{{\mathrm W}}
\newcommand{\Z}{{\mathrm Z}}
\newcommand{\La}{{\mathrm L}}
\newcommand{\cp}{\mathrm{CP}}
\newcommand{\tto}{\twoheadrightarrow}
\newcommand{\rel}{\mathrm{rel}}
\newcommand{\graph}{\mathrm{graph}}
\newcommand{\norm}{\mathrm{norm}}
\newcommand{\reduce}{{\mathrm{reduce}}}
\newcommand{\Is}{{\mathrm{Is}}}
\newcommand{\R}{\mathrm{R}}
\newcommand{\B}{\mathrm{B}}
\newcommand{\Sims} {\mathrm{Sims}}
\newcommand{\Sim} {\mathrm{Sim}}
\newcommand{\Can} {\mathrm{Can}}
\newcommand{\Cans} {\mathrm{Cans}}
\newcommand{\Str} {\mathrm{Str}}
\newcommand{\structure}{\mathrm{struct}}

\newcommand{\ra}{\rightarrow}
\newcommand{\Lla}{\Longleftarrow}
\newcommand{\Lra}{\Longrightarrow}
\newcommand{\Llra}{\Longleftrightarrow}
\newcommand{\lgl}{\langle}
\newcommand{\rgl}{\rangle}

\newcommand{\al}{\alpha}
\newcommand{\om}{\omega}
\newcommand{\vp}{\varphi}
\newcommand{\sse}{\subseteq}
\newcommand{\contains}{\supseteq}
\newcommand{\forces}{\Vdash}
\newcommand{\levels}{\mathrm{levels}}

\newcommand{\Fra}{Fra\"{\i}ss\'e\  }

\newcommand{\down}{~{\hskip-3pt\downarrow\hskip-3pt}~}

\newcommand{\core}{\hbox{core\ }}
\newcommand{\one}{\mathop {1}\limits^{\circ}}
\newcommand{\edge}{\hbox{$\circ\mkern-8mu -\mkern-8mu\circ\mkern2mu$}}
\newcommand{\arrow}{\twoheadrightarrow}
\newcommand{\id}{\hbox{id}}

\newtheorem{thm}{Theorem}[section]
\newtheorem{lem}{Lemma}[section]
\newtheorem{cor}{Corollary}[section]
\newtheorem{ass}{Assumption}[section]
\theoremstyle{definition}
\newtheorem{defin}{Definition}[section]
\newtheorem{example}{Example}[section]
\newtheorem{fact}{Fact}[section]
\newtheorem{prop}[thm]{Proposition} 
\newtheorem{question}[thm]{Question}
\newtheorem{questions}[thm]{Questions}

\section{Introduction}\label{sec.intro}

There is an extensive literature concerning colouring problems of the
following type: Given conditions on the colouring function conclude
that the restriction of the colouring function to a particular subset
of its range is injective. In particular, this has been been widely
studied for finite graphs, and then mostly complete graphs, and edge
colourings. See \cite{FMO} and \cite{KL} for two survey papers.
Many of those problems and results are in analogy to standard Ramsey
type problems and results, for which then the restriction of the
colouring function to a particular subset of its range is asked to be
constant. Those investigations have given rise to notions like
anti-Ramsey numbers and restricted Ramsey numbers etc; and then
results finding anti-Ramsey numbers for certain pairs of graphs,
finding upper bounds, complexity, asymptotic behavior of the
anti-Ramsey numbers.  As is outlined in the two surveys cited above,
problems in the context of rainbow Ramsey have been studied
extensively for $k$-bounded colourings of pairs of natural numbers,
from finding exact numbers, to finding growth rates, to investigations
of the relative strength of the statement to other Ramsey properties;
see \cite{CM} for relative strength investigation.  There are a few
articles extending the work to hypergraphs and a few to infinite
graphs. See \cite{LVW} and \cite{HT}.

However, the literature has completely missed, for colouring of
arbitrary finite substructures, finding rainbow copies of the Rado
graph and other ultrahomogeneous structures, as well as investigating
$k$-bounded colorings of copies of a fixed finite graph, rather than
simply edge colorings. In this work, we address this void.

For $\mathrm{B}$ a relational structure we often denote by $B$ the set
of its elements and sometimes use $\mathrm{B}$ to denote the structure
as well as the set of its elements.  For $S \subseteq B$ let
$\mathrm{B}\down S$ be the substructure of $\mathrm{B}$ induced by
$S$. For $\mathrm{B}$ and $\mathrm{C}$ relational structures denote by
\[
\binom{\mathrm{B}}{\mathrm{C}}
\]
the set of induced substructures of $\mathrm{B}$ isomorphic to
$\mathrm{C}$. The elements of $\binom{\mathrm{B}}{\mathrm{B}}$ are the
{\em copies of $B$ in $B$}. An {\em embedding} of $\mathrm{B}$ is an
isomorphism of $\mathrm{B}$ to a copy of $\mathrm{B}$ in
$\mathrm{B}$.

Let $\mathrm{A}$ and  $\mathrm{C}$ be relational structures and
$k$ a natural number.  A function $\gamma:
\binom{\mathrm{A}}{\mathrm{C}}\to \omega$ is called {\em $k$-bounded} if
$|\gamma^{-1}(n)|\leq k$ for every $n\in \omega$. A weaker notion and
more appropriate for the arguments in this paper is that of of a
$k$-delta function. 
By a {\em $\delta$-system of copies} of
$\mathrm{C}$ in $\mathrm{A}$ we mean a subset $\mathcal{S}$ of
$\binom{\mathrm{A}}{\mathrm{C}}$ for which
$X\setminus\bigcap_{\mathrm{Y}\in \mathcal{S}}Y$ is a singleton set
for all $\mathrm{X}\in \mathcal{S}$. 
(This is a special case of the more general set-theoretic notion of $\delta$-system.)
Then we call a function $\gamma:
\binom{\mathrm{A}}{\mathrm{C}}\to \omega$ is {\em $k$-delta} if there
is no $\delta$-system $\mathcal{S}\subseteq
\binom{\mathrm{A}}{\mathrm{C}}$ having $k+1$ elements so that $\gamma$
is constant on $\mathcal{S}$. Clearly every $k$-bounded function is
$k$-delta.

The rainbow Ramsey properties we are interested in are defined as follows.

\begin{defin}\label{defin:rainbowRa}   Let $\mathrm{A}$, $\mathrm{B}$ and $\mathrm{C}$ be relational
structures and $k$ a natural number.

\noindent The arrow
\begin{align}
\mathrm{A}\underset{k\text{-delta}}{\xrightarrow{\mathrm{rainbow}}}\big(\mathrm{B}\big)^\mathrm{C} 
\end{align}
means that for every $k$-delta colouring $\gamma:
\binom{\mathrm{A}}{\mathrm{C}}\to \omega$, there exists a
$\mathrm{B}^\ast\in \binom{\mathrm{A}}{\mathrm{B}}$ so that $\gamma$
is one-to-one on $\binom{\mathrm{B^\ast}}{\mathrm{C}}$. \\
The arrow
\begin{align}
\mathrm{A}\underset{k\text{-bdd}}{\xrightarrow{\mathrm{rainbow}}}\big(\mathrm{B}\big)^\mathrm{C} 
\end{align}
means that for every $k$-bounded colouring $\gamma:
\binom{\mathrm{A}}{\mathrm{C}}\to \omega$, there exists a
$\mathrm{B}^\ast\in \binom{\mathrm{A}}{\mathrm{B}}$ so that $\gamma$
is one-to-one on $\binom{\mathrm{B^\ast}}{\mathrm{C}}$.
\end{defin}

Thus 
$\mathrm{A}\underset{k\text{-delta}}{\xrightarrow{\mathrm{rainbow}}}\big(\mathrm{B}\big)^\mathrm{C}
$ implies
$\mathrm{A}\underset{k\text{-bdd}}{\xrightarrow{\mathrm{rainbow}}}\big(\mathrm{B}\big)^\mathrm{C}$.
\vskip 6pt

A countably infinite relational structure $\mathrm{R}$ is {\em rainbow
Ramsey} if for every finite substructure $\mathrm{C}$ and every $k\in
\omega$ the relation
$\mathrm{R}\underset{k\text{-bdd}}{\xrightarrow{\mathrm{rainbow}}}\big(\mathrm{R}\big)^\mathrm{C}$
holds; similarly  $\mathrm{R}$ is {\em delta rainbow
Ramsey} if for every finite substructure $\mathrm{C}$ and every $k\in
\omega$ the relation
$\mathrm{R}\underset{k\text{-delta}}{\xrightarrow{\mathrm{rainbow}}}\big(\mathrm{R}\big)^\mathrm{C}$
holds. A class $\mathscr{F}$ of finite relational structures is {\em
rainbow Ramsey} if for all structures $\mathrm{B}$ and $\mathrm{C}$ in
$\mathscr{F}$ and every $k\in \omega$, there exists a structure
$\mathrm{A}\in \mathscr{F}$ for which relation (2) holds; similarly
for {\em delta rainbow Ramsey}.  If $\mathrm{R}$ is a countably
infinite (delta) rainbow Ramsey relational structure, then the
class of finite structures isomorphic to an induced substructure of
$\mathrm{R}$, the age of $\mathrm{R}$, $\Age(\mathrm{R})$, is also
(delta) rainbow Ramsey; see Corollary  \ref{cor:infinite->finite}.

Relational structures $\mathrm{A}$ and $\mathrm{B}$ are called {\em
equimorphic}, or said to be {\em siblings}, if there exists an
isomorphic injection of $\mathrm{A}$ into $\mathrm{B}$ and an
isomorphic injection of $\mathrm{B}$ into $\mathrm{A}$. 
Note that if
$\mathrm{A}$ and $\mathrm{B}$ are siblings then for every
$\mathrm{A}^\ast \in
\binom{\mathrm{A}}{\mathrm{A}}$ the set
$\binom{\mathrm{A}^\ast}{\mathrm{B}}\not=\emptyset$ and for every
$\mathrm{B}^\ast\in \binom{\mathrm{A}}{\mathrm{B}}$ the set
$\binom{\mathrm{B}^\ast}{\mathrm{A}}\not=\emptyset$. 
If $\mathrm{A}$
and $\mathrm{B}$ are equimorphic relational structures, then
$\mathrm{A}$ is (delta) rainbow Ramsey if and only if $\mathrm{B}$ is
(delta) rainbow Ramsey; see Lemma \ref{lem:siblingsrain}.

A binary relational structure is {\em simple} if for each of its
symmetric relations the structure is a simple graph and for each of
its directed relations the structure is the orientation of a simple
graph; see Definition \ref{defin:simplestr} for a more precise
definition.  The Rado graph is an example of a simple binary structure
as is every simple graph.  A structure is {\em ultrahomogeneous}, also
often called simply {\em homogeneous}, if every isomorphism between
finite substructures extends to an automorphism of the entire
structure. See \cite{F} for an introduction to homogeneous structures.
We prove that simple ultrahomogeneous countable binary relational
structures are (delta) rainbow Ramsey, as are other simple binary
relational structures; see Theorem
\ref{thm:final} for a more precise statement. It follows that the age
of any (delta) rainbow Ramsey structure is also (delta)
rainbow Ramsey, see Corollary \ref{cor:infinite->finite}. 
That such classes of finite simple binary relational
structures are rainbow Ramsey also follows via a
similar construction as given here from the Ne\v set\v ril-R\"odl
partition theorem; see \cite{NR}.
In fact,  any Fra\"{i}ss\'{e} class of finite relational structures with free amalgamation and  the Ramsey property is rainbow Ramsey, by a similar argument.

We conclude the introduction by pointing out that,
while rainbow Ramsey holds for $k$-bounded colorings of the natural numbers as an easy consequence of Ramsey's Theorem, and while
for uncountable sets rainbow Ramsey results are  independent of ZFC (see \cite{Todorcevic83} and \cite{Abraham/Cummings/Smyth07}),  
ours is the first work showing that  homogeneous relational structures without the Ramsey property may still be rainbow Ramsey.

The authors wish to thank Jaukub Jasi\'{n}ski for asking the question of whether rainbow Ramsey might hold for  the Rado graph, which led to this work.

\section{Trees of sequences}

Let $\mathfrak{T}_\omega$ be the tree consisting of all finite
sequences with entries in $\omega$.  The order of the tree
$\mathfrak{T}$, denoted by $\subseteq$, is given by sequence
extension.  That is $x=\langle x(0),x(1),\dots, x(n-1)\rangle\subseteq
y=\langle y(0),y(1),\dots, y(m-1)\rangle$ if $n\leq m$ and $x(i)=y(i$)
for all $i\in n$. Let $S\subseteq \mathfrak{T}_\omega$.  The sequence
$y\in S$ is an {\em immediate $S$-successor} of the sequence $x\in S$
if $x\subseteq y$ and and there is no sequence $z\in S$ with
$x\subsetneq z\subsetneq y$. The $S$-degree of $x\in S$ is the number
of immediate $S$-successors of $x$.  The length $|x|$ of the sequence
$x=\langle x(0),x(1),\dots, x(n-1)\rangle$ is $n$.  The {\em
intersection} $x\wedge y$ of two sequences is the longest sequence $z$
with $z\subseteq x$ and $z\subseteq y$. For $R\subseteq
\mathfrak{T}_\omega$ let $\levels(R)=\{|x| : x\in R\}$. For $|x|<|y|$
the number $y(|x|)$ is the {\em passing number} of $y$ at $x$.


We write $x\prec y$ if $x$ and $y$ are incomparable under $\subseteq$
and if
\[
x(|x\wedge y|)<y(|x\wedge y|).  
\]
That is, for two incomparable sequences $x$ and $y$, if the passing
number of $x$ is smaller than the passing number of $y$ at their
intersection then $x\prec y$. For $\subseteq$ incomparable sequences
the relation $\prec$ agrees with the lexicographic order on the tree
$\mathfrak{T_d}$.

As in the third paragraph of the preliminaries section of \cite{S} and
Definition~4.1 of \cite{S}, we define:
\begin{defin}\label{defin:omegatr}
A subset $T$ of $\mathfrak{T}_\omega$ is an $\omega$-tree if it is not
empty, closed under initial segments, has no endpoints and the
$T$-degree of every sequence $x\in T$ is finite.
\end{defin} 

For $2\leq \mathfrak{d}\in \omega$ let $\mathfrak{T_d}\subseteq
\mathfrak{T}_\omega$ be the tree consisting of all finite sequences
with entries in $\mathfrak{d}$.  Then $\mathfrak{T_d}$ is an
$\omega$-tree and actually a wide $\omega$-tree according to
Definition~4.1 of \cite{S}.  The root of $\mathfrak{T_d}$ is the empty
sequence $\emptyset$. 

Let $S\subseteq \mathfrak{T_d}$ be a set of sequences.  Following
Definitions 2.2 and 3.2 of \cite{S} we define: The set $S$ is {\em
transversal} if no two different elements of $S$ have the same length
and it is an {\em antichain} if no two different elements of $S$ are
ordered under the $\subseteq$ relation. The closure of $S$ is the set
of intersections of elements of $S$. A subset $V\subseteq
\mathfrak{T_d}$ is cofinal if for every $x\in \mathfrak{T_d}$ there is
a $v\in V$ with $x\subseteq v$.

The set $S$ is {\em diagonal} if it is an antichain and the closure of
$S$ is transversal and the degree of every element of closure $S$
within the closure of $S$ is at most 2.  The set $S$ is {\em strongly
diagonal} if it is diagonal and if for all $x,y,z\in S$ with
$x\not=y$:
\begin{enumerate} 
\item If $|x\wedge y|<|z|$ and $x\wedge y\not\subseteq z$ then $z(|x\wedge y|)=0$. 
\item $x(|x\wedge y|)\in \{0,1\}$. 
\end{enumerate}

\begin{defin}\label{defin:sim}
A function $f$ mapping a set $R\subseteq \mathfrak{T_d}$ of sequences
into $\mathfrak{T_d}$ has the properties of Order, Level, Level-imp, Pnp (passing
number preservation), Pnp-strong or Lexico respectively, if for all
$x,y,z,u\in R$:
\begin{description}
\item[Order:] \hskip 40pt $x\wedge y\subseteq z\wedge u$ if and only if $f(x)\wedge f(y)\subseteq f(z)\wedge f(u)$. 
\item[Level:] \hskip 42pt$|x\wedge y|<|z\wedge u|$ if and only if $|f(x)\wedge f(y)|<|f(z)\wedge f(u)|$.
\item[Level-imp:] \hskip 19pt$|x\wedge y|<|z\wedge u|$ implies $|f(x)\wedge f(y)|<|f(z)\wedge f(u)|$.
\item[Pnp:] \hskip 47pt If  $|z|>|x|$ then $z(|x|)=f(z)(|f(x)|)$.
\item[Pnp-strong:]\hskip 10pt  If  $|z|>|x\wedge y|$ then $z(|x\wedge y)|=f(z)(|f(x)\wedge f(y)|)$.
\item[Lexico:] \hskip 34 pt If $x\prec y$ then $f(x)\prec f(y)$. 
\end{description}   
\end{defin}

According to Definition 3.1 of \cite{S}, we define:
\begin{defin}\label{defin: strongsim}
Let $R$ and $S$ be two subsets of $\mathfrak{T}_\omega$. A bijection
$f$ of $R$ to $S$ is a {\em strong similarity} of $R$ to $S$ if it
satisfies the properties of Order and Level and Pnp-strong; (and also
Lexico which is implied by Pnp-strong).

The sets $R$ and $S$ are strongly similar, $R\stackrel{s}{\sim} S$, if
there is a strong similarity, (it will be unique), of $R$ to $S$.

Then for $T$ a set of sequences, the set $\Sims_T(R)$ is the
equivalence class of all subsets $S$ of $T$ with $R\stackrel{s}{\sim}
S$.
\end{defin}

Observe that a strong similarity maps sequences of the same length to
sequences of the same length. It can be viewed as ``stretching" or in
the inverse then ``compressing" length of sequences, but preserving
all shapes and passing numbers.

\section{Binary structures encoded by  $\mathfrak{T_d}$}

 \begin{defin}\label{defin:simplestr} A binary relational structure
 $\mathrm{R}$ on a set $R$ with binary relations $E_0, E_1, \dots,
 E_{n-1}$ and $n\geq 2$ is {\em simple} if: 

\begin{enumerate} 

\item Each of the binary relations $E_i$ is irreflexive.  
\item There is a number $m\leq n$, the {\em symmetry number} of $\mathrm{R}$ so that:
 \begin{enumerate} 
	\item $E_i(x,y)$ implies $E_i(y,x)$ for all $x,y\in  R$ and all $i\in m$.  
	\item $E_i(x,y)$ implies $\neg E_i(y,x)$ for  all $x,y\in R$ and all $m\leq i\in n$.  
  \end{enumerate} 
\item For all $i,j\in n$ with $i\not=j$: $E_i(x,y)$ implies $\neg E_j(x,y)$ and  $\neg E_j(y,x)$.  
\item For all $(x,y)$, there is an $i\in n$ with $E_i(x,y)$.
\end{enumerate} 
\end{defin} 

That is, the set $R$ together with only one of the relations $E_i$ is
a simple graph if $i\in m$, and is an orientation of a simple graph if
$m\leq i\in n$.  Any two different such graphs do not have overlapping
edges. Note that every binary relational structure having only
irreflexive relations can be encoded as a simple binary relational
structure.
 
For $n$ and $m$ in $\omega$ and $m\leq n$ let $\mathscr{S}(n,m)$
denote the class of all simple binary relational structures with set
of binary relations $\{E_0, E_1, \dots, E_n\}$ and symmetry number
$m$. Let $\mathscr{A}(n,m)\subseteq \mathscr{S}(n,m)$ be the subclass
of the finite structures in $\mathscr{S}(n,m)$.  The class
$\mathscr{A}(n,m)$ is an age with amalgamation whose \Fra limit is
denoted by $\mathbb{U}(n,m)$. The ultrahomogeneous structures of the
form $\mathbb{U}(n,m)$ are the {\em random simple binary
ultrahomogeneous structures}.  Note that then $\mathbb{U}(2,2)$ is the
random graph. (It has adjacent and non-adjacent two element subsets as
sets of relations.)  Also, $\mathbb{U}(1,0)$ is the random Tournament,
$\mathbb{U}(2,1)$ is the random oriented graph, $\mathbb{U}(1,1)$ is
the complete graph $\mathrm{K}_\omega$, $\mathbb{U}(0,0)$ is the
relational structure having no relations, $\mathbb{U}(3,3)$ may be
seen as a graph having heavy and light edges and $\mathbb{U}(3,2)$ as
a graph having symmetric and oriented edges.  The cases $(n,m)=(0,0)$
and $(n,m)=(1,1)$ do not quite fit into the general notational
framework developed subsequently and will be dealt with later on. Note
that for $m\leq n$ the inequality $(0,0)\not=(n,m)\not=(1,1)$ holds if
and only if $m+2(n-m)\geq 2$.

Let $m\leq n\in \omega$ and $\mathfrak{d}=m+2(n-m)\geq 2$.  The tree
$\mathfrak{T_d}$ encodes a simple binary relational structure
$\mathbb{T}(n,m)\in \mathscr{S}(n,m)$ on the set of sequences in
$\mathfrak{T_d}$, in which for all $x,y\in \mathfrak{T_d}$ with
$x\not=y$: \begin{enumerate} \item If $|x|<|y|$ then let for the
passing number $p=y(|x|)$: \begin{enumerate} \item If $p\in m$ then
$E_p(x,y)$ and $E_p(y,x)$.  \item If $p\geq m$ and $p-m$ even then
$E_{m+\frac{p}{2}}(x,y)$.  \item If $p\geq m$ and $p-m$ odd then
$E_{m+\frac{p-1}{2}}(y,x)$.  \end{enumerate} \item If $|x|=|y|$ and
$x\prec y$ then if $m=0$: $E_0(x,y)$.  If $m>0$: $E_0(x,y)$ and
$E_0(y,x)$.  \end{enumerate} For $\mathrm{R}$ a relational structure
and $F$ a subset of $R$, the set of elements of $\mathrm{R}$, let
$\mathrm{R}\down F$ be the substructure of $\mathrm{R}$ induced by
$F$.  Note that if $f$ is a similarity of a subset $F$ of
$\mathfrak{T_d}$ to a subset $G$ of $\mathfrak{T}_d$ then $f$ is an
isomorphism of $\mathbb{T}(n,m)\down F$ to $\mathbb{T}(n,m)\down G$.
 
In accordance with Definition 3.3 of \cite{S}: 
\begin{defin}\label{defin:strdiagonalization}
An injection $f: \mathfrak{T_d}\to \mathfrak{T_d}$ is a {\em strong
diagonalization} of $\mathfrak{T_d}$ if the image of $f$ is a strongly
diagonal subset of $\mathfrak{T_d}$ and if $f$ satisfies the
properties of Level-imp and Pnp and Lexico.
\end{defin}

According to Theorem 4.1 of \cite{S} there exists a strong
diagonalization of $\mathfrak{T_d}$ into $\mathfrak{T_d}$. We fix for
every $2\leq \mathfrak{d}\in \omega$ such a strong diagonalization
$\Delta_{\mathfrak{d}}$ and denote by $\mathfrak{D_\mathfrak{d}}$
the image of the strong diagonalization $\Delta_{\mathfrak{d}}$. For
$\mathfrak{d}=m+2(n-m)\geq 2$ let
$\mathbb{D}(n,m)=\mathbb{T}(n,m)\down\mathfrak{D_\mathfrak{d}}$. Note
that $\Delta_{\mathfrak{d}}$ is an isomorphism of $\mathbb{T}(n,m)$ to
$\mathbb{D}(n,m)$.  Hence $\mathbb{D}(n,m)$ is equimorphic to
$\mathbb{T}(n,m)$; but is not a homogeneous structure.

\begin{thm}\label{thm:sibUT}
Let $m\leq n\in \omega$ and $m+2(n-m)\geq 2$. The simple binary
relational structures $\mathbb{U}(n,m)$ and $\mathbb{T}(n,m)$ and
$\mathbb{D}(n,m)$ are equimorphic.  \end{thm}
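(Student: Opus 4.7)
The plan is to observe that $\Delta_{\mathfrak{d}}$ is an isomorphism of $\mathbb{T}(n,m)$ onto $\mathbb{D}(n,m)$, as the text already notes, so it suffices to produce mutual embeddings between $\mathbb{U}(n,m)$ and $\mathbb{T}(n,m)$, after which equimorphism among the three structures follows by transitivity.

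For the embedding $\mathbb{T}(n,m) \hookrightarrow \mathbb{U}(n,m)$, I would use universality of the \Fra limit: by construction $\mathbb{T}(n,m)$ is a countable structure in $\mathscr{S}(n,m)$, hence its age is contained in $\mathscr{A}(n,m)=\Age(\mathbb{U}(n,m))$, so $\mathbb{T}(n,m)$ embeds into $\mathbb{U}(n,m)$.

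For the reverse embedding $\mathbb{U}(n,m) \hookrightarrow \mathbb{T}(n,m)$, I would enumerate $\mathbb{U}(n,m) = \{u_0, u_1, \ldots\}$ and construct an embedding $f$ recursively, maintaining the invariant that the lengths $|f(u_0)| < |f(u_1)| < \cdots$ are strictly increasing. At stage $i+1$, pick any $\ell > |f(u_i)|$ and define $f(u_{i+1}) \in \mathfrak{T_d}$ to be the sequence of length $\ell$ whose value at each position $|f(u_j)|$ (for $j \leq i$) is the passing number $p_j \in \mathfrak{d}$ dictated by the encoding rules so as to realize the relation holding between $u_j$ and $u_{i+1}$ in $\mathbb{U}(n,m)$, with all remaining coordinates set to $0$. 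Because the positions $|f(u_0)|, \ldots, |f(u_i)|$ are pairwise distinct and strictly below $\ell$, these passing-number choices are independent; every symmetric relation $E_\alpha$ with $\alpha < m$ is realized by the single passing number $p=\alpha$, and every oriented relation $E_\alpha$ with $m \leq \alpha < n$ is realized in either direction by one of two dedicated passing numbers, so each prescribed relation can always be produced.

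The main obstacle is the rigidity of $\mathbb{T}(n,m)$ on sequences of equal length: any two such sequences are already related by the lex-order $E_0$-relation, which cannot be freely matched to the arbitrary relations available in $\mathbb{U}(n,m)$. The strictly-increasing-length invariant sidesteps this difficulty entirely, reducing the construction to the independent choice of passing numbers, at which point verifying that $f$ preserves all relations becomes immediate from the encoding: for $i<j$, the relation between $f(u_i)$ and $f(u_j)$ in $\mathbb{T}(n,m)$ is determined entirely by $f(u_j)(|f(u_i)|)$, which was chosen precisely to match the relation between $u_i$ and $u_j$ in $\mathbb{U}(n,m)$.
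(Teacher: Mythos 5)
Your proposal is correct and takes essentially the same approach as the paper: the same reduction via the isomorphism $\Delta_{\mathfrak{d}}$ and via universality of $\mathbb{U}(n,m)$, and the same key fact that passing numbers at distinct lengths can be prescribed independently. The only difference is presentational: the paper obtains the reverse embedding by showing that a transversal cofinal subset of $\mathfrak{T_d}$ induces a substructure with the mapping extension property (hence isomorphic to the \Fra limit), whereas you carry out the equivalent one-point-at-a-time recursion explicitly.
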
 \begin{proof} Because
$\mathbb{T}(n,m)$ is equimorphic to $\mathbb{D}(n,m)$ it suffices to
show that $\mathbb{T}(n,m)$ and $\mathbb{U}(n,m)$ are equimorphic.

Every finite induced substructure of $\mathbb{T}(n,m)$ is an element
of $\mathscr{A}(n,m)$. Hence and because the homogeneous structure
$\mathbb{U}(n,m)$ is universal, there exists an isomorphism of
$\mathbb{T}(n,m)$ into $\mathbb{U}(n,m)$.

Let $\mathfrak{d}=m+2(n-m)$ and $V$ be a transversal and cofinal
subset of $\mathfrak{T_d}$. Let $F$ be a finite subset of $V$. Then
for every function $f: F\to \mathfrak{d}$ there exists a sequence
$y\in V$ so that the passing number $y(|x|)=f(x)$ for every $x\in
F$. This then translates to the fact that the substructure of
$\mathbb{T}(n,m)$ induced by $V$ has the mapping extension property
with respect to the age $\mathscr{A}(n,m)$. Hence the structure
induced by $V$ is homogeneous with age $\mathscr{A}(n,m)$ and hence
isomorphic to $\mathbb{U}(n,m)$.  \end{proof}

\section{Milliken}

\begin{defin}\label{defin:strong}
The set $T\subseteq \mathfrak{T_d}$ is {\em closed by levels} if $s\in
T$ whenever there exist $t\in T$ with $s\subseteq t$ and $y\in T$ with
$|s|=|y|$.  Let $T$ be a meet closed set of sequences which is also
closed by levels and let $n\in \omega+1$. The set $S\subseteq T$ is an
element of $\Str^n(T)$ if:
\begin{enumerate}
  \item $|\mathrm{levels}(S)|=n$.  
  \item $S$ is meet closed and closed by levels. 
  \item For all $s\in S$,   the degree of $s$  in $S$ is 0 or  equal to the degree of $s$ in $T$.
  \end{enumerate}
\end{defin}   
Note the following:

The tree $\mathfrak{T_d}$ is meet closed and closed by levels.  Let
$T\sse \mathfrak{T_d}$ be meet closed, closed by levels, non-empty and
having no endpoints.  Then for $S\in \Str^\omega(T)$ removing, from
every sequence $s\in S$, the entries whose indices are not in
$\levels(S)$ results in the tree $\mathfrak{T_d}$.  For every $S\in \Str^\omega(T)$
there exists a unique strong similarity $f: T\to S$.  This
similarity $f$ maps every level of $T$ into a level of $S$, preserving
the order, via length, of the levels.  Conversely, for every strong
similarity $f$ of $T$ into $T$, the set $f[T]\in \Str^\omega(T)$.  If $R\in
\Str^\omega(S)$ and $S\in \Str^\omega(T)$ then $R\in \Str^\omega(T)$.
For $T\in \Str^\omega(\mathfrak{T_d})$ let $T^\ast:= \{x\in
\mathfrak{T_d}\mid \exists y\in T\, (x\subseteq y)\}$. Then $T^\ast$
is an $\omega$-tree and if $T^\ast=S^\ast$ then $T=S$.

 Hence the following Lemma \ref{lem:Milik} is a direct consequence of Theorem 5.2 of~\cite{S}. 
\begin{lem} \label{lem:Milik}
Let $T=f[\mathfrak{T_d}]$ for $f$ a strong similarity of\/ $\mathfrak{T_d}$ to $T\subseteq \mathfrak{T_d}$.  Let  $F$ be a finite meet closed subset of $T$ and  $m\in \omega$. Then for any colouring $\gamma: \Sims_T(F)\to m\in \omega$ there exists a strong similarity function $f$ of\/ $T$ to a tree  $S\subseteq T$  so that $\gamma$ is constant on $\Sims_S(F)$.   
\end{lem}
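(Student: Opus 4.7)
The plan is to derive the lemma directly from Theorem 5.2 of \cite{S}, which is the version of Milliken's Ramsey theorem for strong subtrees of $\mathfrak{T_d}$ phrased in the strong-similarity language developed earlier in the paper.

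First I would unpack the hypothesis that $T = f[\mathfrak{T_d}]$ is a strong similarity image of $\mathfrak{T_d}$. By the observations listed just before the lemma, such a $T$ is precisely an element of $\Str^\omega(\mathfrak{T_d})$: it is meet closed, closed by levels, and every node has the same $T$-degree as the corresponding node of $\mathfrak{T_d}$ has in $\mathfrak{T_d}$. Consequently, strong similarities of $T$ into itself correspond bijectively to elements of $\Str^\omega(T)$, so producing the required $S$ amounts to producing a tree $S \in \Str^\omega(T)$. Similarly, since $F$ is a finite meet-closed subset of $T$, the equivalence class $\Sims_T(F)$ is nothing other than the set of all finite meet-closed $F' \sse T$ having the same level shape and passing numbers as $F$; these are exactly the copies of a finite strong subtree type inside $T$ that Milliken-style theorems are designed to partition.

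With these identifications in place, I would simply apply Theorem 5.2 of \cite{S} to the colouring $\gamma : \Sims_T(F) \to m$. This yields some $S \in \Str^\omega(T)$ on which $\gamma$ is constant over $\Sims_S(F)$, and the unique strong similarity from $\mathfrak{T_d}$ onto $S$ (equivalently, from $T$ onto $S$) is the function required by the statement. The only step I expect to demand any care is verifying that the notion of \emph{copy of $F$} used in the cited Milliken-type theorem coincides with the equivalence class $\Sims_T(F)$ of Definition \ref{defin: strongsim}, and that an arbitrary finite meet-closed seed $F$ (rather than a literal strong subtree) is an admissible colouring target; but these are bookkeeping matters fully handled by the definitions and bullet list preceding the lemma, so no further combinatorial work is needed.
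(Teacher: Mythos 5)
Your proposal matches the paper's own treatment: the paper gives no separate argument for this lemma beyond the list of observations identifying strong similarity images of $\mathfrak{T_d}$ with elements of $\Str^\omega(\mathfrak{T_d})$ and the remark that the lemma is then a direct consequence of Theorem 5.2 of \cite{S}. Your identification of $\Sims_T(F)$ with the copies coloured in that Milliken-type theorem and your appeal to the unique strong similarity onto the resulting $S$ are exactly the intended bookkeeping, so the proof is correct and essentially identical in approach.
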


The following thus follows from the fact that if $F$ and $G$ are two
finite and strongly diagonal sets with $F\stackrel{s}{\sim} G$ and
with $\closure(F)=\closure(G)$ then $F=G$.

\begin{cor} \label{cor:Milik2}
Let $T=f[\mathfrak{T_d}]$ for $f$ a strong similarity of\/ $\mathfrak{T_d}$ to $T\subseteq \mathfrak{T_d}$.  Let  $F$ be a finite and strongly diagonal   subset of $T$ and  $m\in \omega$. Then for any colouring $\gamma: \Sims_T(F)\to m\in \omega$ there exists a strong similarity function $f$ of\/ $T$ to a tree  $S\subseteq T$  so that $\gamma$ is constant on $\Sims_S(F)$.   
\end{cor}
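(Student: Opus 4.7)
The plan is to reduce to Lemma \ref{lem:Milik} by replacing $F$ with its closure. Set $\widehat{F} := \closure(F)$. In a tree the meet of any finite set coincides with one of its pairwise meets, so $\widehat{F}$ is itself meet closed, and it sits inside the meet-closed set $T$ (which is the image of $\mathfrak{T_d}$ under a strong similarity, hence meet closed). Thus $\widehat{F}$ satisfies the hypothesis of Lemma \ref{lem:Milik}.

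I would then pull $\gamma$ back to a colouring $\gamma' : \Sims_T(\widehat{F}) \to m$ as follows. For each $G \in \Sims_T(\widehat{F})$ there is a unique strong similarity $\phi_G : \widehat{F} \to G$; put
\[
\gamma'(G) := \gamma(\phi_G[F]).
\]
The set $\phi_G[F]$ is strongly diagonal (the image of a strongly diagonal set under a strong similarity) and lies in $T$, so it belongs to $\Sims_T(F)$ and $\gamma'$ is well-defined.

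Applying Lemma \ref{lem:Milik} to the meet-closed finite set $\widehat{F}$ and the colouring $\gamma'$ produces a strong similarity of $T$ onto some $S \subseteq T$ on which $\gamma'$ is constant on $\Sims_S(\widehat{F})$. It remains to show that $\gamma$ is constant on $\Sims_S(F)$. Given $F_1, F_2 \in \Sims_S(F)$, a strong similarity $\sigma : F \to F_i$ extends canonically, via the Order property which forces $x \wedge y \mapsto \sigma(x) \wedge \sigma(y)$ to be well-defined, to a strong similarity $\widehat{F} \to \closure(F_i)$. Since $S$ is meet closed, $\closure(F_i) \subseteq S$, so $\closure(F_i) \in \Sims_S(\widehat{F})$ and thus $\gamma'(\closure(F_1)) = \gamma'(\closure(F_2))$. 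By the definition of $\gamma'$, this common value equals $\gamma(\phi_i[F])$ where $\phi_i : \widehat{F} \to \closure(F_i)$ is the unique strong similarity. Both $F_i$ and $\phi_i[F]$ are strongly diagonal, strongly similar to $F$, and have closure $\closure(F_i)$, so by the rigidity fact stated immediately before the corollary, $F_i = \phi_i[F]$. This gives $\gamma(F_1) = \gamma(F_2)$.

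The one genuinely delicate step is the final identification $F_i = \phi_i[F]$, which relies precisely on the cited fact that a strongly diagonal set is determined by its strong similarity type together with its closure. Everything else amounts to routine manipulation of strong similarities and the standard extension of a similarity on $F$ to one on $\widehat{F}$.
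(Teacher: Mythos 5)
Your proposal is correct and follows essentially the same route the paper intends: pass from $F$ to its meet-closed closure, apply Lemma~\ref{lem:Milik} to the induced colouring, and transfer back using the fact (stated just before the corollary) that a finite strongly diagonal set is determined by its strong similarity type together with its closure. Your writeup is simply a careful expansion of the paper's one-line justification, with the key step $F_i=\phi_i[F]$ correctly identified as the place where that rigidity fact is used.
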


By repeated application of the above Lemma \ref{lem:Milik} and Corollary \ref{cor:Milik2} we obtain:

\begin{thm}\label{thm:fundamental}
Let $\{F_i\mid i\in p\in \omega\}$ be a finite set of finite meet
closed subsets of\/ $\mathfrak{T_d}$ so that $\neg (F_i
\stackrel{s}{\sim} F_j)$ for $i\not=j$.  Let $m_i\in \omega$ for all
$i\in p$. Then for any set $\gamma_i: \Sims_{\mathfrak{T_d}}(F_i)\to
m_i$ of colouring functions there exists a strong similarity function
$f$ of\/ $\mathfrak{T_d}$ to a tree $T\subseteq \mathfrak{T_d}$ so
that each one of the colouring functions $\gamma_i$ is constant on
$\Sims_T(F_i)$.
\end{thm}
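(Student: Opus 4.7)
The plan is to iterate Lemma \ref{lem:Milik} a total of $p$ times, stabilising one coloring $\gamma_i$ per stage while preserving the stabilisations obtained at earlier stages. The composability of strong similarities, recorded in the discussion preceding the lemma (if $R\in \Str^\omega(S)$ and $S\in \Str^\omega(T)$ then $R\in \Str^\omega(T)$), is what allows the stages to glue into a single strong similarity of $\mathfrak{T_d}$ onto the final tree.

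Set $T_0:=\mathfrak{T_d}$ and let $g_0$ be the identity on $\mathfrak{T_d}$. Inductively suppose a chain $T_0\supseteq T_1\supseteq\cdots\supseteq T_i$ has been built, together with a strong similarity $g_i:\mathfrak{T_d}\to T_i$, such that $\gamma_j$ is constant on $\Sims_{T_{j+1}}(F_j)$ for every $j<i$. The set $g_i[F_i]\subseteq T_i$ is finite and meet closed (meet-closedness is preserved under strong similarity) and is strongly similar to $F_i$, so the equivalence classes $\Sims_{T_i}(g_i[F_i])$ and $\Sims_{T_i}(F_i)$ coincide. Apply Lemma \ref{lem:Milik} to $T_i$, to $g_i[F_i]$, and to the restriction of $\gamma_i$ to $\Sims_{T_i}(F_i)$, whose codomain is the finite set $m_i$. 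This yields a strong similarity $h_{i+1}:T_i\to T_{i+1}$ with $T_{i+1}\subseteq T_i$ and $\gamma_i$ constant on $\Sims_{T_{i+1}}(F_i)$. Put $g_{i+1}:=h_{i+1}\circ g_i$, a strong similarity from $\mathfrak{T_d}$ onto $T_{i+1}$.

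After $p$ stages set $T:=T_p$ and $f:=g_p$. For each $j<p$ the inclusion $T_p\subseteq T_{j+1}$ gives $\Sims_{T_p}(F_j)\subseteq \Sims_{T_{j+1}}(F_j)$, on which $\gamma_j$ is already constant by construction, so $\gamma_j$ is constant on $\Sims_{T_p}(F_j)$. This is the conclusion.

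The only substantive input is Lemma \ref{lem:Milik} itself; the iteration carries no genuine obstacle. The hypothesis that the $F_i$ are pairwise non-strongly-similar is a convenience that merely eliminates trivial redundancy: were two of the finite sets strongly similar, the associated classes $\Sims_{\mathfrak{T_d}}(F_i)$ would coincide and the two colorings could be folded into a single coloring valued in $m_i\times m_j$ before invoking the lemma, so the assumption plays no essential role in the argument.
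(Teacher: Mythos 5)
Your proof is correct and is exactly the argument the paper intends: the paper derives Theorem \ref{thm:fundamental} with the single remark ``by repeated application of Lemma \ref{lem:Milik},'' and your iteration, with the composition of strong similarities and the observation that $\Sims_{T_i}(g_i[F_i])=\Sims_{T_i}(F_i)$, is the straightforward way to carry that out. Your closing remark that the pairwise non-similarity hypothesis is inessential (fold strongly similar $F_i,F_j$ into one coloring valued in $m_i\times m_j$) is also accurate.
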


\begin{cor}\label{cor:fundamental}
Let $\{F_i\mid i\in p\in \omega\}$ be a finite set of strongly
diagonal subsets of\/ $\mathfrak{T_d}$ so that $\neg (F_i
\stackrel{s}{\sim} F_j)$ for $i\not=j$.  Let $m_i\in \omega$ for all
$i\in p$. Then for any set $\gamma_i: \Sims_{\mathfrak{T_d}}(F_i)\to
m_i$ of colouring functions there exists a strong similarity function
$f$ of\/ $\mathfrak{T_d}$ to a tree $T\subseteq \mathfrak{T_d}$ so
that each one of the colouring functions $\gamma_i$ is constant on
$\Sims_T(F_i)$.
\end{cor}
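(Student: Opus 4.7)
The plan is to prove the corollary by induction on $p$, iteratively applying Corollary \ref{cor:Milik2}, in direct parallel to how Theorem \ref{thm:fundamental} follows from Lemma \ref{lem:Milik}. The base case $p=0$ is vacuous (the identity is a strong similarity of $\mathfrak{T_d}$ to itself). For the inductive step, I would first apply the induction hypothesis to $\{F_i \mid i\in p-1\}$ with colourings $\gamma_0,\dots,\gamma_{p-2}$, obtaining a strong similarity $g$ of $\mathfrak{T_d}$ onto $T' := g[\mathfrak{T_d}]$ such that each $\gamma_i$ is constant on $\Sims_{T'}(F_i)$ for $i<p-1$.

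Next, I would apply Corollary \ref{cor:Milik2} to the tree $T'$, to the finite strongly diagonal subset $F := g[F_{p-1}] \subseteq T'$, and to the colouring $\gamma_{p-1}$ restricted to $\Sims_{T'}(F_{p-1}) = \Sims_{T'}(F)$. The image $g[F_{p-1}]$ is strongly diagonal because strong similarities preserve Order, Level, and Pnp-strong, and these properties together imply preservation of the antichain condition, the transversality of the closure, the degree bound on the closure, and the two passing-number conditions defining ``strongly diagonal''. Corollary \ref{cor:Milik2} then yields a strong similarity $h$ of $T'$ onto some $T \subseteq T'$ on which $\gamma_{p-1}$ is constant on $\Sims_T(F_{p-1})$. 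Since the composition of strong similarities is again a strong similarity, $f := h\circ g$ satisfies $T = f[\mathfrak{T_d}]$. For each $i<p-1$, the containment $T \subseteq T'$ gives $\Sims_T(F_i)\subseteq \Sims_{T'}(F_i)$, so $\gamma_i$ remains constant on $\Sims_T(F_i)$ with its previously established value.

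There is no real obstacle; the substance is hidden in Corollary \ref{cor:Milik2} and the basic formalism of strong similarities. The only mild subtlety is that $F_{p-1}$ itself need not lie inside $T'$, which is why one passes to the similar copy $g[F_{p-1}] \subseteq T'$; since $\Sims_{T'}(F_{p-1})=\Sims_{T'}(g[F_{p-1}])$, this substitution does not affect the set of copies being coloured or the restriction of $\gamma_{p-1}$. Note finally that the pairwise non-similarity hypothesis $\neg(F_i\stackrel{s}{\sim} F_j)$ is not strictly used by this inductive argument—the proof proceeds identically whether or not the $F_i$ are pairwise non-similar—but is natural, since it ensures the colouring domains $\Sims_{\mathfrak{T_d}}(F_i)$ are genuinely distinct sets of copies.
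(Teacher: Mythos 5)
Your proposal is correct and matches the paper's intended argument: the paper derives Corollary \ref{cor:fundamental} precisely ``by repeated application of'' Corollary \ref{cor:Milik2}, which is exactly your induction on $p$ with composition of strong similarities. Your added care about passing from $F_{p-1}$ to $g[F_{p-1}]$ (and that strong similarities preserve strong diagonality) just fills in details the paper leaves implicit.
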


Hence we can apply these results to  $\mathfrak{D_d}$.

\begin{thm}\label{lem:beginbas}
Let $\{F_i\mid i\in p\in \omega\}$ be a finite set of finite subsets
of\/ $\mathfrak{D_d}$ so that $\neg (F_i \stackrel{s}{\sim} F_j)$ for
$i\not=j$.  Let $m_i\in \omega$ for all $i\in p$. Then for any set
$\gamma_i: \Sims_{\mathfrak{D_d}}(F_i)\to m_i$ of colouring functions
there exists a strong similarity function $g$ of\/ $\mathfrak{D_d}$ to
a tree $D\subseteq \mathfrak{D_d}$ so that each one of the colouring
functions $\gamma_i$ is constant on $\Sims_D(F_i)$.
\end{thm}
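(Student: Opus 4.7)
My plan is to derive Theorem \ref{lem:beginbas} from Corollary \ref{cor:fundamental} by transferring the Milliken-type result from $\mathfrak{T_d}$ to the strongly diagonal set $\mathfrak{D_\mathfrak{d}}$. The starting observation is that every $F_i \subseteq \mathfrak{D_\mathfrak{d}}$ is automatically a finite strongly diagonal subset of $\mathfrak{T_d}$, and that strong similarity is an intrinsic notion depending only on levels, meets and passing numbers in $\mathfrak{T_d}$; hence $\Sims_{\mathfrak{D_\mathfrak{d}}}(F_i) \subseteq \Sims_{\mathfrak{T_d}}(F_i)$, with strict inclusion in general, since a strongly similar copy of $F_i$ inside $\mathfrak{T_d}$ need not sit inside the specific image $\mathfrak{D_\mathfrak{d}}$.

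Next I would extend each given colouring $\gamma_i : \Sims_{\mathfrak{D_\mathfrak{d}}}(F_i) \to m_i$ to $\tilde\gamma_i : \Sims_{\mathfrak{T_d}}(F_i) \to m_i$ by setting $\tilde\gamma_i(G) = \gamma_i(G)$ whenever $G \subseteq \mathfrak{D_\mathfrak{d}}$ and $\tilde\gamma_i(G) = 0$ otherwise. Applying Corollary \ref{cor:fundamental} to the pairwise non-strongly-similar strongly diagonal sets $\{F_i : i \in p\}$ with these extended colourings yields a strong similarity $f : \mathfrak{T_d} \to T \subseteq \mathfrak{T_d}$ such that each $\tilde\gamma_i$ is constant on $\Sims_T(F_i)$. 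Since strong similarity is preserved under restriction to any subset, $f|_{\mathfrak{D_\mathfrak{d}}}$ is then a strong similarity of $\mathfrak{D_\mathfrak{d}}$ onto the strongly diagonal image $f[\mathfrak{D_\mathfrak{d}}] \subseteq T$.

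The main obstacle is that $f[\mathfrak{D_\mathfrak{d}}]$ need not lie inside the fixed set $\mathfrak{D_\mathfrak{d}}$, whereas Theorem \ref{lem:beginbas} demands $D \subseteq \mathfrak{D_\mathfrak{d}}$. To overcome this, I would reorganize the iterated Milliken argument so that at each stage it is carried out compatibly with $\Delta_\mathfrak{d}$: conjugating the strong similarities produced at each step by $\Delta_\mathfrak{d}$, the preservation properties Level-imp, Pnp and Lexico guarantee that a strong similarity of $\mathfrak{T_d}$ to itself corresponds to a strong similarity of $\mathfrak{D_\mathfrak{d}}$ whose image stays inside $\mathfrak{D_\mathfrak{d}}$. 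The hard part is the bookkeeping around this conjugation: one must verify that meet containments and passing numbers are preserved in the exact form required by the strong similarity axioms, and that the artificial value $0$ assigned to copies outside $\mathfrak{D_\mathfrak{d}}$ cannot be spuriously chosen as the monochromatic colour over a class $\Sims_D(F_i)$ disjoint from the genuine domain of $\gamma_i$. Once these points are checked, taking $g$ to be the resulting strong similarity and $D$ to be its image completes the proof.
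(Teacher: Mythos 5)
Your reduction to Corollary \ref{cor:fundamental} via the diagonalization $\Delta_{\mathfrak{d}}$ is the right general idea, and you have correctly located the obstruction: the tree $T=f[\mathfrak{T_d}]$ produced by Corollary \ref{cor:fundamental} gives no control over $T\cap\mathfrak{D_d}$, so neither $f[\mathfrak{D_d}]$ nor any other strongly similar copy of $\mathfrak{D_d}$ can be guaranteed to lie in $T\cap\mathfrak{D_d}$. But the two devices you propose do not close this gap. The dummy extension $\tilde\gamma_i$ (value $0$ off $\mathfrak{D_d}$) is the wrong auxiliary colouring: the only way to force the final image into $\mathfrak{D_d}$ is to post-compose with $\Delta_{\mathfrak{d}}$, and then the copies of $F_i$ you must control are of the form $\Delta_{\mathfrak{d}}(G)$ with $G\in\Sims_T(F_i)$; the constant value $\tilde\gamma_i(G)$ (possibly the artificial $0$) bears no relation to $\gamma_i(\Delta_{\mathfrak{d}}(G))$, which is the quantity that has to be constant. (Your worry about $0$ being ``spuriously chosen'' is, by contrast, harmless: if you could arrange $D\subseteq T\cap\mathfrak{D_d}$, every copy in $\Sims_D(F_i)$ would be genuine and $\tilde\gamma_i$ would agree with $\gamma_i$ there.) And the proposed repair --- ``reorganize the iterated Milliken argument\dots conjugating the strong similarities produced at each step by $\Delta_{\mathfrak{d}}$'' --- is not carried out; as stated it amounts to asking for a Milliken theorem internal to $\mathfrak{D_d}$, which is essentially the statement being proved.

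The fix is to pull the colouring back rather than extend it: define $\delta_i$ on $\Sims_{\mathfrak{T_d}}(F_i)$ by $\delta_i(F_i')=\gamma_i(\Delta_{\mathfrak{d}}(F_i'))$. This is well defined because each $F_i'$ is strongly diagonal (being strongly similar to the strongly diagonal $F_i$), the restriction of $\Delta_{\mathfrak{d}}$ to a strongly diagonal set is a strong similarity, and hence $\Delta_{\mathfrak{d}}(F_i')\in\Sims_{\mathfrak{D_d}}(F_i)$, the genuine domain of $\gamma_i$. Apply Corollary \ref{cor:fundamental} to the $\delta_i$ to obtain $f:\mathfrak{T_d}\to T$ with each $\delta_i$ constant on $\Sims_T(F_i)$, and let $g$ be the restriction of $\Delta_{\mathfrak{d}}\circ f$ to $\mathfrak{D_d}$, with image $D=g[\mathfrak{D_d}]\subseteq\mathfrak{D_d}$. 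Every member of $\Sims_D(F_i)$ has the form $\Delta_{\mathfrak{d}}(G)$ with $G\subseteq f[\mathfrak{D_d}]\subseteq T$ and $G\in\Sims_T(F_i)$, so $\gamma_i(\Delta_{\mathfrak{d}}(G))=\delta_i(G)$ is constant --- precisely because the auxiliary colouring was defined as a pullback along $\Delta_{\mathfrak{d}}$ rather than by a default value.
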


\begin{proof}
For every $i\in p$ let $\delta_i: \Sims_{\mathfrak{T_d}}(F_i)\to m_i$ be given by $\delta_i(F_i')=\gamma_i(\Delta_{\mathfrak{d}}(F_i'))$ for $F_i'\in \Sims_{\mathfrak{T_d}}(F_i)$.  Observing that $F_i\stackrel{s}{\sim}F_i'$ if and only if $F_i\stackrel{s}{\sim}\Delta_{\mathfrak{d}}(F_i')$ because $F_i$ and hence $F_i'$ are strongly diagonal. 

According to Corollary \ref{cor:fundamental} there exists a strong
similarity function $f$ of\/ $\mathfrak{T_d}$ to a tree $T\subseteq
\mathfrak{T_d}$ so that each one of the colouring functions $\delta_i$
is constant on $\Sims_T(F_i)$. The restriction of $\Delta_k$ to a
strongly diagonal subset $S$ of $\mathfrak{T_d}$ is a strong
similarity of $ S$. Hence the function $g$, which is the restriction
of $\Delta_{\mathfrak{d}}\circ f$ to $\mathfrak{D_d}$ is a strong
similarity of $\mathfrak{D_d}$ to a subset $D$ of $\mathfrak{D_d}$.
\end{proof}

Let $n\geq 2$ and $\mathfrak{d}=m+2(n-m)$. Let $\mathrm{R}$ and
$\mathrm{S}$ be two finite induced substructures of $\mathbb{D}(n,m)$
with domains $R, S \subseteq \mathcal{D}_{\mathfrak{d}}$, the set of
elements of $\mathrm{R}$ and $\mathrm{S}$ respectively. That is
$\mathrm{R}=\mathbb{D}(n,m)\down R$ and
$\mathrm{S}=\mathbb{D}(n,m)\down S$. Then $\mathrm{R}$ and
$\mathrm{S}$ are {\em strongly similar} (as substructures), written
again $\mathrm{R}\stackrel{s}{\sim}\mathrm{S}$, if their domains are
{\em strongly similar}, that is $R\stackrel{s}{\sim} S$.  Note that if
$\mathrm{R}\stackrel{s}{\sim}\mathrm{S}$ then the strong similarity of
$\mathrm{R}$ to $\mathrm{S}$ is an isomorphism of the binary structure
$\mathrm{R}$ to the binary structure $\mathrm{S}$.  For $\mathrm{S}$
an induced substructure of $\mathbb{D}(n,m)$ and $\mathrm{F}$ a finite
induced substructure of $\mathrm{S}$ let
$\Sims_{\mathrm{S}}(\mathrm{F})$ be the set of all induced
substructures $\mathrm{F}'$ of $\mathrm{S}$ with
$\mathrm{F}\stackrel{s}{\sim} \mathrm{F}'$.  Then
$\Sims_\mathrm{S}(\mathrm{F})$ is the set of all $\mathrm{F}'\in
\binom{\mathrm{S}}{\mathrm{F}}$ with
$\mathrm{F}\stackrel{s}{\sim}\mathrm{F}'$. Hence:

\begin{cor}\label{cor:beginbas}
Let $n\geq 2$ and $\mathfrak{d}=m+2(n-m)$. Let $\{\mathrm{F}_i\mid
i\in p\in \omega\}$ be a finite set of finite induced substructures
of\/ $\mathbb{D}(n,m)$ so that $\neg (\mathrm{F}_i \stackrel{s}{\sim}
\mathrm{F}_j)$ for $i\not=j$.  Let $m_i\in \omega$ for all $i\in
p$. \\
Then for any set $\gamma_i: \Sims_{\mathbb{D}(n,m)}(\mathrm{F}_i)\to
m_i$ of colouring functions there exists a strong similarity function
$g$ of\/ $\mathbb{D}(n,m)$ to a copy $\mathrm{D}$ of $\mathbb{D}(n,m)$
so that each one of the colouring functions $\gamma_i$ is constant on
$\Sims_{\mathrm{D}(\mathrm{F}_i})$.
\end{cor}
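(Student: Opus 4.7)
The plan is to show that Corollary \ref{cor:beginbas} is essentially a translation of Theorem \ref{lem:beginbas} from the setting of sequences in $\mathfrak{D_d}$ to the setting of induced substructures of $\mathbb{D}(n,m)$, exploiting the fact already noted in the text that a strong similarity of sequence sets in $\mathfrak{D_d}$ is an isomorphism of the corresponding induced substructures of $\mathbb{D}(n,m)$.

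First, for each $i\in p$ I would let $F_i\subseteq \mathfrak{D_d}$ be the domain of $\mathrm{F}_i$, so that $\mathrm{F}_i=\mathbb{D}(n,m)\down F_i$. By the definition of strong similarity of substructures, $\neg(\mathrm{F}_i\stackrel{s}{\sim}\mathrm{F}_j)$ means exactly $\neg(F_i\stackrel{s}{\sim} F_j)$, so the $F_i$'s are pairwise not strongly similar as subsets of $\mathfrak{D_d}$. Moreover, the map $F_i'\mapsto \mathbb{D}(n,m)\down F_i'$ is a bijection from $\Sims_{\mathfrak{D_d}}(F_i)$ onto $\Sims_{\mathbb{D}(n,m)}(\mathrm{F}_i)$, since every strongly similar subset of $\mathfrak{D_d}$ induces a strongly similar substructure of $\mathbb{D}(n,m)$, and conversely any $\mathrm{F}_i'\in\Sims_{\mathbb{D}(n,m)}(\mathrm{F}_i)$ has domain strongly similar to $F_i$. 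Through this bijection I pull each coloring $\gamma_i$ back to a coloring $\tilde\gamma_i:\Sims_{\mathfrak{D_d}}(F_i)\to m_i$.

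Next, I apply Theorem \ref{lem:beginbas} to the family $\{F_i:i\in p\}$ and the colorings $\tilde\gamma_i$. This yields a strong similarity $g:\mathfrak{D_d}\to D$ for some $D\subseteq\mathfrak{D_d}$ such that each $\tilde\gamma_i$ is constant on $\Sims_D(F_i)$. Because $g$ is a strong similarity of sequence sets in $\mathfrak{D_d}$, it induces an isomorphism of $\mathbb{D}(n,m)=\mathbb{D}(n,m)\down \mathfrak{D_d}$ onto $\mathrm{D}:=\mathbb{D}(n,m)\down D$, as explicitly noted just before the statement of the corollary. Hence $\mathrm{D}\in\binom{\mathbb{D}(n,m)}{\mathbb{D}(n,m)}$ is a copy of $\mathbb{D}(n,m)$, and via the same bijection between strongly similar sequence sets and strongly similar induced substructures, the constancy of $\tilde\gamma_i$ on $\Sims_D(F_i)$ translates to the constancy of $\gamma_i$ on $\Sims_{\mathrm{D}}(\mathrm{F}_i)$.

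There is no real obstacle here; the content of the corollary is already present in Theorem \ref{lem:beginbas}, and the only verification required is the routine observation that strong similarity of subsets of $\mathfrak{D_d}$ corresponds, under the map $F\mapsto \mathbb{D}(n,m)\down F$, to strong similarity of finite induced substructures of $\mathbb{D}(n,m)$, together with the fact that this correspondence preserves the coloring data. The mildest care needed is to check that the bijection between $\Sims_{\mathfrak{D_d}}(F_i)$ and $\Sims_{\mathbb{D}(n,m)}(\mathrm{F}_i)$ is indeed well-defined in both directions, which follows from the definition of strong similarity of substructures given just above the corollary.
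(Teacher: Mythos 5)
Your proposal is correct and follows essentially the same route the paper intends: the corollary is presented there as an immediate consequence ("Hence:") of Theorem \ref{lem:beginbas} together with the preceding paragraph's identification of strong similarity of induced substructures of $\mathbb{D}(n,m)$ with strong similarity of their domains in $\mathfrak{D_d}$, plus the earlier observation that a strong similarity of sequence sets induces an isomorphism of the corresponding induced substructures. Your write-up simply makes explicit the bijection between $\Sims_{\mathfrak{D_d}}(F_i)$ and $\Sims_{\mathbb{D}(n,m)}(\mathrm{F}_i)$ and the pullback of the colourings, which is exactly the intended argument.
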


\section{ $\mathbb{T}(n,m)$ and $\mathbb{U}(n,m)$ and $\mathbb{D}(n,m)$ are rainbow Ramsey.}
 
In this section we show that simple relational structures are
$k$-delta rainbow Ramsey for any $k$, and thus rainbow Ramsey. 

First we show that it is sufficient to show the result for $k=2$.

\begin{lem}\label{lem:2bdd_enough}
Let $\mathrm{R}$ be a relational structure and let
$\C\in\Age(\mathrm{R})$.\\ If \/
$\mathrm{R}\underset{2\text{-delta}}{\xrightarrow{\mathrm{rainbow}}}\big(\mathrm{R}\big)^\mathrm{C}$,
then
$\mathrm{R}\underset{k\text{-delta}}{\xrightarrow{\mathrm{rainbow}}}\big(\mathrm{R}\big)^\mathrm{C}$
for all $k\ge 2$.
\end{lem}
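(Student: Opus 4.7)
The plan is to proceed by induction on $k$, the base case $k=2$ being the hypothesis. For the inductive step, assume the arrow holds for some $k\ge 2$ and consider a $(k+1)$-delta coloring $\gamma:\binom{\mathrm{R}}{\mathrm{C}}\to\omega$; I will construct a $k$-delta refinement $\gamma'$ of $\gamma$, apply the inductive hypothesis to $\gamma'$ to obtain $\mathrm{R}^*\in\binom{\mathrm{R}}{\mathrm{R}}$ on which $\gamma'$ is one-to-one, and then reduce the resulting bounded restriction of $\gamma$ to an injective one by iterated use of the 2-delta hypothesis.

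The refinement is built greedily. Enumerate $\binom{\mathrm{R}}{\mathrm{C}}=\{X_n:n<\omega\}$ and assign each $X_n$ a label $b(X_n)\in\{0,1,\ldots,|\mathrm{C}|\}$, taking the least label $b$ such that for every $(|\mathrm{C}|-1)$-element subset $K\subset X_n$, fewer than $k$ earlier copies $X_j$ with $\gamma(X_j)=\gamma(X_n)$ and $K\subset X_j$ have already received label $b$. To see this choice is available: for each such $K$, the set of all $\gamma$-color-$\gamma(X_n)$ copies containing $K$ is a $\gamma$-monochromatic $\delta$-system, and so has size at most $k+1$ by the $(k+1)$-delta hypothesis; since $2k>k+1$ for $k\ge 2$, at most one label can already hold $k$ of its members, so at most $|\mathrm{C}|$ of the $|\mathrm{C}|+1$ available labels are blocked. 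Setting $\gamma'(X)=(\gamma(X),b(X))$, identified with an element of $\omega$, any $\gamma'$-monochromatic $\delta$-system of size $k+1$ would, examined at its latest-indexed member, contradict the greedy choice, so $\gamma'$ is indeed $k$-delta.

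Applying the inductive hypothesis to $\gamma'$ yields $\mathrm{R}^*$ with $\gamma'$ one-to-one on $\binom{\mathrm{R}^*}{\mathrm{C}}$; since each $\gamma$-color class uses each label at most once, $\gamma$ restricted to $\binom{\mathrm{R}^*}{\mathrm{C}}$ is $(|\mathrm{C}|+1)$-bounded. To reduce a general $m$-bounded $\gamma$ on a copy of $\mathrm{R}$ to an injective coloring, enumerate each color class by rank $r(X)\in\{1,\ldots,m\}$ and define $\tilde\gamma(X)=(\gamma(X),\lceil r(X)/2\rceil)$; this coloring is 2-bounded and hence 2-delta, so the 2-delta hypothesis produces a sub-copy on which $\tilde\gamma$ is one-to-one, making $\gamma$ there at most $\lceil m/2\rceil$-bounded. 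Iterating this halving $\lceil\log_2(|\mathrm{C}|+1)\rceil$ times yields a sub-copy of $\mathrm{R}^*$ on which $\gamma$ is $1$-bounded, i.e., one-to-one, completing the induction.

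The main obstacle is verifying that the greedy refinement works with only $|\mathrm{C}|+1$ auxiliary labels and genuinely yields a $k$-delta coloring; the inequality $2k>k+1$ (used to bound the number of blocked labels per core) is exactly where the assumption $k\ge 2$ enters, and once this is in place the bounded-to-injective reduction is a routine iteration of the 2-delta hypothesis.
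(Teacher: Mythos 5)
Your proof is correct, and it follows the same overall skeleton as the paper's: induction on $k$, a modification of the $(k+1)$-delta colouring into a $k$-delta one exploiting the fact that a monochromatic $\delta$-system is controlled by its $(|\mathrm{C}|-1)$-element core, an application of the inductive hypothesis, and a final appeal to the $2$-delta base case. The implementations of the two sub-steps differ, though. For the refinement, the paper selects exactly one member from each maximal $\delta$-system carrying $k+1$ copies of a given colour and recolours those selected copies with fresh, pairwise distinct colours; you instead take a product refinement $\gamma'=(\gamma,b)$ with a greedily chosen label $b$ from a set of size $|\mathrm{C}|+1$, using the counting argument that at most one label per core can be saturated. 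Both work; your version requires the availability count but avoids enumerating maximal $\delta$-systems. The bigger difference is in the endgame: the paper's surgery has the feature that after the inductive hypothesis produces $\mathrm{R}'$ with $g$ injective, the \emph{original} colouring $f$ is already $2$-delta on $\binom{\mathrm{R}'}{\mathrm{C}}$ (any $f$-monochromatic $\delta$-system there has at most one recoloured member, and $g$-injectivity leaves at most one unrecoloured one), so a single further application of the base case finishes. Your label refinement only yields that $\gamma$ is $(|\mathrm{C}|+1)$-bounded on $\mathrm{R}^*$, which you then reduce to injectivity by roughly $\log_2(|\mathrm{C}|+1)$ iterated halvings via the $2$-delta hypothesis. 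This costs more applications of the base case but as a by-product isolates a reusable fact — that the $2$-delta arrow lets one convert any $m$-bounded colouring into a rainbow one on a sub-copy — which the paper's argument does not make explicit.
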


\begin{proof}
Let  $\C\in\Age(\mathrm{R})$.
The proof is by induction on $k\ge 2$, where the base case is given by the hypothesis.

Thus assume that  $\mathrm{R}\underset{k\text{-delta}}{\xrightarrow{\mathrm{rainbow}}}
\big(\mathrm{R}\big)^\mathrm{C}$. 
Let $f:{\mathrm{R}\choose \C}\ra \om$ be a $k+1$-delta colouring.
Note that if $\mathcal{S}$ is a $\delta$-system of copies of $C$, then
$|\bigcap_{Y\in\mathcal{S}}Y|=|C|-1$.  Thus, each maximal
$\delta$-system $\mathcal{S}$ is uniquely determined by the structure
$\bigcap_{Y\in\mathcal{S}}Y$; for $\mathcal{S}$ is the collection of
all copies $X$ of $C$ in $\Age(\mathrm{R})$ for which
$\bigcap_{Y\in\mathcal{S}}Y$ is an induced substructure of $X$.

Enumerate the members of ${\mathrm{R}\choose \C}$ as $C_n$, $n\in\om$,
and enumerate the maximal $\delta$-systems of copies of $C$ as
$\mathcal{S}_i$, $i\in\om$.  Let $\ell\in\om$ be fixed.  If there are
$k+1$ members of $\mathcal{S}_0$ with $f$-colour $\ell$, then choose
$n(0,\ell)$ to be the least $n$ such that $C_n\in \mathcal{S}_0$ and
$f(C_n)=\ell$, and let $N_{0,\ell}=\{n(0,\ell)\}$.  Otherwise, let
$N_{0,\ell}=\emptyset$.

Suppose we have chosen $N_{i,\ell}$. 
If either (a)
there are $k+1$ members
of $\mathcal{S}_{i+1}$ with $f$-colour $\ell$
and
 there is an
$n\in N_{i,\ell}$ such that $C_n\in\mathcal{S}_{i+1}$ and $f(C_n)=\ell$, or
(b) there are at
most $k$ members of $\mathcal{S}_{i+1}$ with $f$-colour $\ell$, then let
$N_{i+1,\ell}=N_{i,\ell}$.  Otherwise, there are $k+1$ members of
$\mathcal{S}_{i+1}$ with $f$-colour $\ell$ and none of these members have
an index in $N_{i,\ell}$.  Then let $n(i+1,\ell)$ be the least $n$ such that
$C_n\in\mathcal{S}_{i+1}$ with $f(C_n)=\ell$, and set
$N_{i+1,\ell}=N_{i,\ell}\cup\{n(i+1,\ell)\}$.  

 At the end of the inductive
construction, let $N_\ell=\bigcup_{i\in\om}N_{i,\ell}$.  Note that this
construction ensures that for each $i\in\om$ for which
$\mathcal{S}_{i}$ has $k+1$ members with $f$-colour $\ell$, there is
exactly one $n\in N_l$ such that $C_n\in\mathcal{S}_i$.  Note further
that for $\ell\ne \ell'$, $N_\ell\cap N_{\ell'}=\emptyset$.

Let $N=\bigcup_{l\in \om} N_l$ and 
define  a $k$-delta colouring $g$ on ${\mathrm{R}\choose \C}$ as follows.
For each $n\in\om\setminus N$,
let $g(C_n)=f(C_n)$;
and for $l\in N$,
let $g(C_n)=\om+n$.
Then $g$ is $k$-delta, since
if $\mathcal{S}_{i}$ has $k+1$ members with $f$-colour $l$, then 
there is exactly one $n\in N_l$ such that $C_n\in\mathcal{S}_i$, and $g(C_n)$ is defined to be $\om+n$.
By the induction hypothesis, 
there is a copy  $\mathrm{R}'\in {\mathrm{R}\choose\mathrm{R}}$ 
such that $g$ is one-to-one on ${\mathrm{R}'\choose\C}$.
Then  $f$ is $2$-delta on  ${\mathrm{R}'\choose \C}$.
By the  induction hypothesis applied again,  there is another copy $\mathrm{R}^*$ in  ${\mathrm{R}'\choose\mathrm{R}}$  such that $f$ is one-to-one on ${\mathrm{R}^*\choose\C}$.
\end{proof}

\begin{prop}\label{prop:gluing}
Let $m\leq n\in \omega$ and $m+2(n-m)\geq 2$, and let $\mathrm{C}$ be
a finite induced substructure of $\mathbb{D}(n,m)$. Then 
$$
\mathbb{D}(n,m)\underset{2\text{-delta}}{\xrightarrow{\mathrm{rainbow}}}\big(\mathbb{D}(n,m)\big)^\mathrm{C}.
$$
\end{prop}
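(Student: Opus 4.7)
The plan is to apply Corollary~\ref{cor:beginbas} to a finite family of 2-colourings that records, for each similarity type of a pair $\{X,Y\}$ of copies of $\mathrm{C}$ in $\mathbb{D}(n,m)$, whether the two copies receive the same $\gamma$-value. By Lemma~\ref{lem:2bdd_enough} it suffices to treat $k=2$, so I may assume that $\gamma\colon\binom{\mathbb{D}(n,m)}{\mathrm{C}}\to\omega$ is 2-delta.

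Choose representatives $\mathrm{H}_0,\ldots,\mathrm{H}_{p-1}$ of the finitely many strong similarity types of pairs $\{X,Y\}$ of distinct copies $X,Y\in\binom{\mathbb{D}(n,m)}{\mathrm{C}}$ (each $\mathrm{H}_j$ is a finite substructure of $\mathbb{D}(n,m)$ together with a distinguished unordered decomposition into two copies of $\mathrm{C}$). For each $j<p$ define $c_j\colon\Sims_{\mathbb{D}(n,m)}(\mathrm{H}_j)\to 2$ by setting $c_j(\mathrm{H}')=0$ if and only if the two copies of $\mathrm{C}$ composing $\mathrm{H}'$ receive the same $\gamma$-value. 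Corollary~\ref{cor:beginbas} then yields a copy $\mathrm{D}$ of $\mathbb{D}(n,m)$ inside $\mathbb{D}(n,m)$ on which each $c_j$ is constant on $\Sims_{\mathrm{D}}(\mathrm{H}_j)$. If every $c_j$ takes value $1$ on $\mathrm{D}$, then no two distinct copies of $\mathrm{C}$ in $\mathrm{D}$ share a $\gamma$-value, so $\gamma$ is one-to-one on $\binom{\mathrm{D}}{\mathrm{C}}$ and we are done.

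The remaining case is that $c_{j_0}\equiv 0$ on $\Sims_{\mathrm{D}}(\mathrm{H}_{j_0})$ for some $j_0$. Here the goal is to contradict the 2-delta hypothesis by exhibiting a 3-element $\delta$-system in $\binom{\mathrm{D}}{\mathrm{C}}$ on which $\gamma$ is constant. If $\mathrm{H}_{j_0}$ is itself a $\delta$-pair type (two copies of $\mathrm{C}$ sharing $|\mathrm{C}|-1$ elements), I would locate a 3-element $\delta$-system $\{Z_0,Z_1,Z_2\}\subseteq\binom{\mathrm{D}}{\mathrm{C}}$ each of whose pairs has similarity type $\mathrm{H}_{j_0}$, so that $c_{j_0}\equiv 0$ forces $\gamma(Z_0)=\gamma(Z_1)=\gamma(Z_2)$. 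Otherwise I would pick an arbitrary 3-element $\delta$-system $\{Z_0,Z_1,Z_2\}$ in $\binom{\mathrm{D}}{\mathrm{C}}$ and produce a single witness $W\in\binom{\mathrm{D}}{\mathrm{C}}$ such that each pair $\{Z_i,W\}$ has similarity type $\mathrm{H}_{j_0}$; then $c_{j_0}\equiv 0$ gives $\gamma(Z_i)=\gamma(W)$ for $i=0,1,2$, again yielding a $\gamma$-monochromatic $\delta$-system.

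The main obstacle is the joint-extension step that produces either the symmetric $\delta$-system or the witness $W$. By Theorem~\ref{thm:sibUT}, $\mathrm{D}$ is equimorphic with the homogeneous Fra\"{\i}ss\'{e} limit $\mathbb{U}(n,m)$, and the argument in the proof of Theorem~\ref{thm:sibUT} shows that any transversal cofinal subset of the tree underlying $\mathrm{D}$ realises the mapping extension property of $\mathbb{U}(n,m)$. In the tree language, $\mathrm{H}_{j_0}$ prescribes the passing numbers of $W$ against the finitely many elements of $Z_0\cup Z_1\cup Z_2$; the required compatibility of these constraints across $i=0,1,2$ follows from the fact that $\mathrm{H}_{j_0}$ is a pair type already realised in $\mathbb{D}(n,m)$ and that each $Z_i$ is a copy of $\mathrm{C}$. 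Placing $W$ at a sufficiently high level of $\mathrm{D}$ with the prescribed passing numbers then closes the argument.
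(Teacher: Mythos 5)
Your overall skeleton is the paper's: reduce to deciding, for each strong similarity type of a pair of copies of $\mathrm{C}$, whether the two copies get equal $\gamma$-values; homogenize these finitely many $2$-colourings by Corollary~\ref{cor:beginbas}; and rule out the outcome ``equal'' for every type by exhibiting a $\gamma$-monochromatic $3$-element $\delta$-system, contradicting $2$-delta. The genuine gap is in that last step, which is the heart of the proof. Your second case --- take an \emph{arbitrary} $3$-element $\delta$-system $\{Z_0,Z_1,Z_2\}$ and find one witness $W$ with each $Z_i\cup W$ of the fixed type $\mathrm{H}_{j_0}$ --- does not work: a strong similarity type prescribes exactly how the levels of the two parts interleave, where their meets fall, and all passing numbers at those meets. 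Since the $Z_i$ differ pairwise in single elements lying at various levels while $W$ is fixed, the three interleaving patterns of $Z_i\cup W$ cannot in general coincide, let alone equal a prescribed $\mathrm{H}_{j_0}$. The appeal to the mapping extension property of $\mathbb{U}(n,m)$ conflates isomorphism type with strong similarity type: that property only lets you add vertices \emph{above} everything with prescribed passing numbers, so it can never realize a pair type in which $W$'s levels interleave with those of $Z_i$. Your first case (a $\delta$-system all of whose pairwise unions have type $\mathrm{H}_{j_0}$) is likewise asserted without a construction and faces the same level-interleaving obstruction.

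The paper resolves this by reversing the quantifiers: instead of fixing a $\delta$-system and hunting for a witness, it starts from an arbitrary representative $(B,X,Y)$ of the type and \emph{builds} the $\delta$-system to fit. Let $x$ be the longest element of $B\setminus(X\cap Y)$; by the ordering convention $x\in X$. Stretch every longer element of $B$ by duplicating its entry at level $|x|$ (the map $\Lambda_x$), and replace $x$ by each of $x$, $x\concat 0$, $x\concat 1$ in turn. This yields three triples $(B^j,X^j,Y^j)\equiv(B,X,Y)$ with $Y^0=Y^1=Y^2$ and $\{X^0,X^1,X^2\}$ a $\delta$-system; applying $\Delta_{\mathfrak{d}}$ puts them back inside $\mathfrak{D_d}$, and the homogenizing similarity $g$ carries them into the good copy. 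The homogenized value ``equal'' then gives $\lambda(g[X^j])=\lambda(g[Y^0])$ for all $j$, so the three $g[X^j]$ form a monochromatic $\delta$-system. Note that the monochromaticity is transmitted through the \emph{common second coordinate} $Y$; no control over the pairwise types among $X^0,X^1,X^2$ is needed, which is precisely what lets the argument avoid both of your problematic cases. A further small point: to apply the partition theorem you should colour the underlying sets $B=X\cup Y$ rather than decorated pairs, using the observation that for each $B$ and each $\equiv$-class there is at most one decomposition $(X,Y)$ of $B$ in that class.
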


\begin{proof}
Let $\mathfrak{d}=m+2(n-m)$ and $C\subseteq \mathfrak{D_d}$ with
$\mathrm{C}=\mathbb{D}(n,m)\down C$.  Let $\mathscr{B}$ be the set of
triples $(B, X,Y)$ of subsets of $\mathfrak{D_d}$ for which $B=X\cup
Y$ and $X\not=Y$ and $X\stackrel{s}{\sim}C\stackrel{s}{\sim}Y$. To
easily identify the order of the copies we require further that the
longest sequence $x\not\in X\cap Y$ is an element of $X$.  Write
$(B,X,Y)\stackrel{s}{\sim}(B',X',Y')$ if $B\stackrel{s}{\sim}B'$,  and moreover write 
 $(B,X,Y)\equiv(B',X',Y')$ if $B\stackrel{s}{\sim}B'$ and if
$f[X]=X'$ and $f[Y]=Y'$ for the (unique) strong similarity $f$ of $B$ to
$B'$. Note that if $g$ is a strong similarity function of
$\mathfrak{D_d}$ onto a subset of $\mathfrak{D_d}$ and if
$(B,X,Y)\equiv(B',X',Y')$ then
$(B,X,Y)\equiv(g(B),g(X),g(Y))\equiv(g(B'),g(X'),g(Y'))$. Also, if
$(B,X,Y)\equiv(B,X',Y')$ then $X=X'$ and $Y=Y'$.

Enumerate the finitely many $\equiv$ equivalence classes as
$\mathcal{P}_i$ for $i\in r\in \omega$.  Let $\mathscr{B}_0$ denote
the set of all $B$ for which there exist $X$ and $Y$ such that
$(B,X,Y)\in\mathscr{B}$.  
Observe that for every $B\in \mathscr{B}_0$ and every $i\in r$, either
there is no pair $(X,Y)$ of copies of $C$ with $(B,X,Y)\in
\mathcal{P}_i$, or else there exists exactly one such pair.

Let $i\in r$ and $(B,X,Y)\in \mathcal{P}_i$ be given.  We claim that
there are at least three distinct $(B_j,X_j,Y_j)$, $j\le 2$, such
that:
\begin{enumerate}
\item  each $(B_j,X_j,Y_j)\equiv (B,X,Y)$,  and hence are in $\mathcal{P}_i$; 
\item $X_0,X_1,X_2$ are distinct and form a $\delta$-system; 
\item and $Y_0=Y_1=Y_2$.
\end{enumerate}

The three distinct sets will be obtained by stretching some appropriate
elements. First since $B$ is a subset of $\mathfrak{D_d}$, $B$ is also a
subset of $\mathfrak{T_d}$ and is strongly diagonal.  Let $x$ denote
the longest element in $B$ with $x\not\in X\cap Y$; without loss of
generality we may assume that $x\in X$.
Now for $z\in B$, define $\Lambda_x(z)=$
\[\begin{cases}
 z \text{ if }  |z|<|x|;\\
 (z(0),\dots,z(|x|-1),z(|x|),z(|x|),z(|x|+1),\dots,z(|z|-1)) \text{ if }  |z|> |x|. \\
\end{cases} \]
Now define $x^0 =x\concat{0}$, $x^1=x\concat{1}$, and $x^2=x$. For each $j\leq 2$,
let 
\[ \begin{cases}
B^j= \{\Lambda_x(z):z\in B\setminus \{x\}\}\cup\{x^j\} \\
X^j=\Lambda_x(X\setminus \{x\})\cup\{x^j\} \\
Y^j=\Lambda_x(Y). \\
\end{cases} \]
Then each $B^j$ is strongly diagonal, $B^j\stackrel{s}{\sim} B$, and
moreover, $(B^j,X^j,Y^j)\equiv (B,X,Y)$.  Let $B_j,X_j,Y_j$ denote
$\Delta_{\mathfrak{d}}(B^j), \Delta_{\mathfrak{d}}(X^j),\Delta_{\mathfrak{d}}(Y^j)$, respectively.
Since $\Delta_{\mathfrak{d}}$ is a strong diagonalization of
$\mathfrak{T_d}$, each $(B_j,X_j,Y_j)$, $j\in 2$ is in
$\mathcal{P}_i$.  The sets $X_0,X_1,X_2$ form a $\delta$-system of
copies of $C$.  It follows that for any strong similarity $f$, 
$f(\mathcal{P}_i)$ contains three different $\equiv$ related elements
with the same third entry whose three second entries form a
$\delta$-system of copies of $C$.

Let $\delta: \Sims_{\mathbb{D}(n,m)}(\mathrm{C})\to \omega$ be a
2-delta colouring.  Let $\lambda: \Sims_{\mathfrak{D_d}}(C)\to
\omega$ be given by $\lambda(C)=\delta(\mathrm{C})$ for
$\mathrm{C}=\mathbb{D}(n,m)\down C$. Then $\lambda$ is a 2-delta
colouring of $\Sims_{\mathfrak{D_d}}(C)$.  Associate with every $B\in
\mathscr{B}_0$ the $r$-tuple $\gamma(B)=(\sigma_i(B);i\in r)$, with
entries one of the elements in the set $\{=, \not=, \not\in\}$, so
that for $i\in r$:
\[   
\sigma_i(B)=  
\begin{cases}
 =    & \text{if $(B,X,Y)\in \mathcal{P}_i$ for  sets $X$ and $Y$ with $\lambda(X)=\lambda(Y)$  }, \\ 
\not=    & \text{if $(B,X,Y)\in \mathcal{P}_i$ for  sets $X$ and $Y$ with $\lambda(X)\not=\lambda(Y)$  }, \\ 
\not\in &\text{if there are no sets $X$ and $Y$ with $(B,X,Y)\in \mathcal{P}_i$}.
\end{cases}
\]
According to Theorem \ref{lem:beginbas} there exists a strong
similarity function $g$ of\/ $\mathfrak{D_d}$ to a tree $D\subseteq
\mathfrak{D_d}$ so that the colouring function $\gamma$ is constant on
every $\stackrel{s}{\sim}$-equivalence class of subsets in
$\mathscr{B}_0$.  Let $\mathscr{M}$ denote the set of all the copies
of $B$ in $D$.  

If there is an $i\in r$ and a $B\in \mathscr{M}$ for which
$\sigma_i(B)$ is equal to $=$, then there are sets $X$ and $Y$ with
$(B,X,Y)\in \mathcal{P}_i$ and with $\sigma_i(X)=\sigma_i(Y)$. By the
above the $\equiv$-equivalence class containing $(B,X,Y)$ contains (at least)
three elements $(B_j,X_j,Y_j)$, $j\le 2$, with the $X_j$, $j\le 2$,
distinct and moreover forming a $\delta$-system of copies of $C$. Then
$(g[B_j], g[X_j],g[Y_j])$, $j\le 2$, are three triples for which
$g[X_0], g[X_1], g[X_2]$ form a $\delta$-system of copies of $C$ with
$\lambda(g[X_0])=\lambda(g[X_1])=\lambda(g[X_2])$, contradicting that
$\lambda$ is 2-delta.

Thus $\sigma_i(B)$ is not equal to $=$ for every $B\in
\mathscr{M}$ and every $i\in r$.  Now suppose $X$ and $Y$
are copies of $C$ in $D$, the $g$-image of $\mathfrak{D_d}$.  Letting
$B=X\cup Y$, the triple $(B,X,Y)$ is in $\mathcal{P}_i$ for some $i\in
r$, and thus $\lambda(X) \neq \lambda(Y)$. 

This completes the proof. \end{proof}
 
We now observe that (delta) rainbow Ramsey is preserved among equimorphic structures.

\begin{lem}\label{lem:siblingsrain}  
Let $\mathrm{A}$ and $\mathrm{B}$ be two equimorphic structures. Then
\[
\mathrm{A}\underset{k\text{-delta}}{\xrightarrow{\mathrm{rainbow}}}\big(\mathrm{A}\big)^\mathrm{C}  \text{\, \, \,  if and only if\, \, \,  }  \mathrm{B}\underset{k\text{-delta}}{\xrightarrow{\mathrm{rainbow}}}\big(\mathrm{B}\big)^\mathrm{C}.  
\]
Similarly for the rainbow Ramsey property.
\end{lem}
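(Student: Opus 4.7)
The plan is to prove the forward implication for the $k$-delta version; the reverse follows by symmetry by swapping the roles of $\mathrm{A}$ and $\mathrm{B}$, and the $k$-bounded version is proved by the identical argument (substituting ``$k$-bounded'' for ``$k$-delta'' throughout). Fix embeddings $i\colon \mathrm{A}\hookrightarrow \mathrm{B}$ and $j\colon \mathrm{B}\hookrightarrow \mathrm{A}$ witnessing equimorphism, and assume $\mathrm{A}\underset{k\text{-delta}}{\xrightarrow{\mathrm{rainbow}}}\big(\mathrm{A}\big)^{\mathrm{C}}$. Let $\gamma\colon \binom{\mathrm{B}}{\mathrm{C}}\to\omega$ be an arbitrary $k$-delta colouring.

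The first step is to transfer $\gamma$ through $i$. Since $i$ is an isomorphism of $\mathrm{A}$ onto the induced substructure $i[\mathrm{A}]\subseteq \mathrm{B}$, it induces a bijection between $\binom{\mathrm{A}}{\mathrm{C}}$ and $\binom{i[\mathrm{A}]}{\mathrm{C}}$, so we may define $\tilde\gamma\colon \binom{\mathrm{A}}{\mathrm{C}}\to\omega$ by $\tilde\gamma(\mathrm{C}')=\gamma(i[\mathrm{C}'])$. A key verification is that $\tilde\gamma$ is still $k$-delta: any $\delta$-system $\mathcal{S}\subseteq\binom{\mathrm{A}}{\mathrm{C}}$ of size $k+1$ on which $\tilde\gamma$ is constant would be mapped by the injection $i$ to a $\delta$-system $\{i[\mathrm{X}]:\mathrm{X}\in\mathcal{S}\}$ in $\binom{\mathrm{B}}{\mathrm{C}}$ of size $k+1$ on which $\gamma$ is constant, contradicting the $k$-delta property of $\gamma$. (For the $k$-bounded variant one instead observes that $|\tilde\gamma^{-1}(n)|\le|\gamma^{-1}(n)|\le k$.)

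Now invoke the hypothesis on $\tilde\gamma$ to obtain $\mathrm{A}^{\ast}\in\binom{\mathrm{A}}{\mathrm{A}}$ such that $\tilde\gamma$ is one-to-one on $\binom{\mathrm{A}^{\ast}}{\mathrm{C}}$. Pushing forward through $i$, the image $i[\mathrm{A}^{\ast}]$ is an induced substructure of $\mathrm{B}$ isomorphic to $\mathrm{A}$, and $\gamma$ is one-to-one on $\binom{i[\mathrm{A}^{\ast}]}{\mathrm{C}}$ by construction of $\tilde\gamma$. Finally, since $\mathrm{A}$ and $\mathrm{B}$ are siblings, the observation recorded just before the lemma shows that every copy of $\mathrm{A}$ contains a copy of $\mathrm{B}$ as an induced substructure; applying this to $i[\mathrm{A}^{\ast}]\cong\mathrm{A}$ yields some $\mathrm{B}^{\ast}\in\binom{i[\mathrm{A}^{\ast}]}{\mathrm{B}}\subseteq \binom{\mathrm{B}}{\mathrm{B}}$, and since $\binom{\mathrm{B}^{\ast}}{\mathrm{C}}\subseteq\binom{i[\mathrm{A}^{\ast}]}{\mathrm{C}}$ the colouring $\gamma$ remains one-to-one on $\binom{\mathrm{B}^{\ast}}{\mathrm{C}}$, as required. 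There is no serious obstacle here; the only point requiring care is the preservation of the $k$-delta (or $k$-bounded) property under the pull-back, which reduces to the fact that embeddings preserve intersections and hence $\delta$-systems.
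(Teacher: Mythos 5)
Your proposal is correct and follows essentially the same route as the paper: restrict (pull back) the colouring to a copy of $\mathrm{A}$ inside $\mathrm{B}$, apply the hypothesis to get $\mathrm{A}^{\ast}$ with the colouring injective on its copies of $\mathrm{C}$, and then use the sibling observation to extract a copy of $\mathrm{B}$ inside $\mathrm{A}^{\ast}$. The paper merely phrases the transfer as "assume $\mathrm{A}$ is an induced substructure of $\mathrm{B}$" and leaves implicit the check (which you spell out) that the pulled-back colouring remains $k$-delta.
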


\begin{proof}
By symmetry it suffices to assume that $\mathrm{A}$ is an
induced substructure of $\mathrm{B}$. 

Let $\mathrm{A}\underset{k\text{-delta}}{\xrightarrow{\mathrm{rainbow}}}\big(\mathrm{A}\big)^\mathrm{C}$
and $\gamma: \binom{\mathrm{B}}{\mathrm{C}}\to \omega$ be
$k$-delta. The restriction of $\gamma$ to
$\binom{\mathrm{A}}{\mathrm{C}}$ is $k$-delta and hence there a
copy $\mathrm{A}^\ast\in \binom{\mathrm{A}}{\mathrm{A}}$ so that
$\gamma$ is one to one on $\binom{\mathrm{A}^\ast}{\mathrm{C}}$. Let
$\mathrm{B}^\ast\in \binom{\mathrm{A}^\ast}{\mathrm{B}}$.  Then
$\gamma$ is one to one on $\binom{\mathrm{B}^\ast}{\mathrm{C}}$.
\end{proof}

We now come to the main result of this paper. 

\begin{thm}\label{thm:final}
The binary ultrahomogeneous structure $\mathbb{U}(n,m)$ is (delta) rainbow
Ramsey for all $m$ and $n$ in $\omega$ with $m\leq n$.
 
If $m\leq n\in \omega$ and $m+2(n-m)\geq 2$, then the tree structure
$\mathbb{T}(n,m)$ and its diagonal substructure $\mathbb{D}(n,m)$ of
$\mathbb{T}(n,m)$ are (delta) rainbow Ramsey as well.  
\end{thm}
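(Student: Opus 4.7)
The plan is to assemble Theorem \ref{thm:final} from the components already in hand, handling the two degenerate cases separately.

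First, I treat the generic case $m+2(n-m)\geq 2$. Proposition \ref{prop:gluing} supplies the base case $k=2$ of the delta rainbow Ramsey property for $\mathbb{D}(n,m)$ and every finite induced substructure $\mathrm{C}$. Lemma \ref{lem:2bdd_enough}, applied with $\mathrm{R}=\mathbb{D}(n,m)$, then promotes this to $k$-delta rainbow Ramsey for every $k\geq 2$; since every $k$-bounded colouring is $k$-delta (as noted after Definition \ref{defin:rainbowRa}), the ordinary rainbow Ramsey property follows as well. Thus $\mathbb{D}(n,m)$ is (delta) rainbow Ramsey. Theorem \ref{thm:sibUT} identifies $\mathbb{U}(n,m)$, $\mathbb{T}(n,m)$, and $\mathbb{D}(n,m)$ as pairwise equimorphic, so Lemma \ref{lem:siblingsrain} transfers (delta) rainbow Ramsey to the other two structures.

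The only remaining cases for $\mathbb{U}(n,m)$ are $(n,m)=(0,0)$ and $(n,m)=(1,1)$, i.e.\ those excluded by the condition $m+2(n-m)\geq 2$. In the first, $\mathbb{U}(0,0)$ is a bare countably infinite set, a finite induced substructure $\mathrm{C}$ is simply a set of some cardinality $c$, and $\binom{\mathbb{U}(0,0)}{\mathrm{C}}$ may be identified with $[\omega]^c$. In the second, $\mathbb{U}(1,1)=\mathrm{K}_\omega$, and since every $c$-element subset induces a copy of $\mathrm{C}=\mathrm{K}_c$, the set $\binom{\mathrm{K}_\omega}{\mathrm{C}}$ is again naturally identified with $[\omega]^c$ for $c=|C|$. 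Both cases therefore reduce to the statement that every $k$-bounded colouring of $[\omega]^c$ admits an infinite rainbow subset, which is the classical rainbow Ramsey theorem for the natural numbers referred to in the introduction and is an easy consequence of Ramsey's theorem.

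The substantial work has been done inside Proposition \ref{prop:gluing}; the present theorem is essentially a verification that all the pieces fit together. The only conceptual checks needed are that Lemma \ref{lem:2bdd_enough} indeed applies to the non-homogeneous structure $\mathbb{D}(n,m)$---which is fine since that lemma is stated for an arbitrary relational structure---and that the equimorphism transfer in Lemma \ref{lem:siblingsrain} preserves both the delta and the bounded versions of rainbow Ramsey, which the proof of that lemma delivers. No genuine obstacle remains; the hard content has been absorbed into Proposition \ref{prop:gluing}, whose proof relies on Theorem \ref{lem:beginbas} (the Milliken-style partition theorem transported to $\mathfrak{D_\mathfrak{d}}$) together with the stretching argument that produces $\delta$-systems inside each $\equiv$-equivalence class.
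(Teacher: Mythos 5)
Your proposal is correct and follows exactly the paper's own argument: Proposition \ref{prop:gluing} plus Lemma \ref{lem:2bdd_enough} give the result for $\mathbb{D}(n,m)$, Theorem \ref{thm:sibUT} with Lemma \ref{lem:siblingsrain} transfers it to $\mathbb{T}(n,m)$ and $\mathbb{U}(n,m)$, and the degenerate cases $(0,0)$ and $(1,1)$ reduce to the classical rainbow Ramsey theorem for $[\omega]^c$. Your handling of the special cases is slightly more explicit than the paper's, but the route is the same.
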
 

\begin{proof}
If $m\leq n\in \omega$ and $m+2(n-m)\geq 2$, the Theorem follows from
Proposition \ref{prop:gluing} in conjunction with
Lemma~\ref{lem:2bdd_enough} and then Theorem \ref{thm:sibUT} in
conjunction with Lemma~\ref{lem:siblingsrain}.
 
The special cases $\mathbb{U}(0,0)$ and $\mathbb{U}(1,1)$ follow from the
standard Ramsey theorem in a similar but much simpler way as in the
proof of Proposition \ref{prop:gluing}; see \cite{CM}.  
\end{proof}
 
By a standard compactness, the general (delta) rainbow Ramsey result implies a  finite version. 

\begin{lem}\label{lem:infinite->finite}
Let $\mathrm{R}$ be a countably infinite relational structure for
which the relation
$\mathrm{R}\underset{k\text{-delta}}{\xrightarrow{\mathrm{rainbow}}}\big(\mathrm{R}\big)^\mathrm{C}$
holds. Then for each $\B\in\Age(\mathrm{R})$ there is an
$\A\in\Age(\mathrm{R})$ such that the relation
$\mathrm{A}\underset{k\text{-delta}}{\xrightarrow{\mathrm{rainbow}}}\big(\mathrm{B}\big)^\mathrm{C}$
holds. \\
The corresponding result holds for rainbow Ramsey.
\end{lem}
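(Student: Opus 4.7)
The plan is a standard König's Lemma compactness argument, working with the finite sets of partitions that $k$-delta colourings induce. I would argue by contradiction, supposing that for some $\B\in\Age(\R)$, no $\A\in\Age(\R)$ satisfies the displayed relation.

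First I would enumerate $R=\{r_0,r_1,\dots\}$ and set $\A_n=\R\down\{r_0,\dots,r_{n-1}\}$, giving an increasing sequence in $\Age(\R)$ with $\binom{\R}{\C}=\bigcup_n\binom{\A_n}{\C}$. By the standing assumption, for each $n$ there is a $k$-delta colouring $\gamma_n:\binom{\A_n}{\C}\to\omega$ admitting no $\B^{*}\in\binom{\A_n}{\B}$ on which $\gamma_n$ is one-to-one.

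Next I would pass from named colours to partitions, so as to eliminate the irrelevant choice of colour labels. Let $T$ be the tree whose nodes at level $n$ are partitions $P$ of the finite set $\binom{\A_n}{\C}$ satisfying (i) $P$ is $k$-delta (no $\delta$-system of size $k+1$ lies in a single block), and (ii) no $\B^{*}\in\binom{\A_n}{\B}$ has its copies of $\C$ in pairwise distinct blocks. Declare the parent of a level-$n$ node to be the partition of $\binom{\A_{n-1}}{\C}$ obtained by intersecting its blocks with $\binom{\A_{n-1}}{\C}$; this restriction plainly inherits (i) and (ii). Each level of $T$ is finite, and for every $n$ the partition induced by $\gamma_n$ sits at level $n$, so $T$ is infinite. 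König's Lemma then supplies an infinite branch $(P_n)_{n<\omega}$.

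Finally I would glue the branch into a bad colouring of $\R$: the compatibility of the branch yields a single partition $P$ of $\binom{\R}{\C}=\bigcup_n\binom{\A_n}{\C}$, and choosing one natural number per block produces $\gamma:\binom{\R}{\C}\to\omega$. Any $\delta$-system of size $k+1$ constant under $\gamma$ is finite, hence lies in some $\A_n$, contradicting (i) for $P_n$; any rainbow $\B^{*}\in\binom{\R}{\B}$ uses only finitely many copies of $\C$, which all lie in some $\A_n$, contradicting (ii) for $P_n$. Thus $\gamma$ is $k$-delta and has no rainbow $\B$. Since $\B\in\Age(\R)$, every $\R^{*}\in\binom{\R}{\R}$ contains a copy of $\B$, so a rainbow $\R^{*}$ would give a rainbow $\B^{*}$; hence $\gamma$ violates the hypothesis on $\R$, a contradiction. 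The same argument with "$k$-bounded" replacing "$k$-delta" throughout handles the rainbow Ramsey version. The only real obstacle is keeping the $\gamma_n$ mutually compatible in spite of their arbitrary colour names; working with partitions of finite sets, where only finitely many options exist at each level, is exactly what makes the compactness go through.
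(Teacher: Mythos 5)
Your proof is correct, and it takes a genuinely different route from the one in the paper. The paper also argues by contradiction from the family $(\gamma_n)$ of bad colourings of the $\mathrm{R}_n$, but it glues them with a nonprincipal ultrafilter $\mathfrak{U}$ on $\omega$: two copies of $\mathrm{C}$ are declared equivalent when the set of $n$ on which $\gamma_n$ agrees on them lies in $\mathfrak{U}$, and $\gamma$ is taken to be injective on the resulting classes. The finite-intersection property of $\mathfrak{U}$ then yields, for any finite configuration (a $\delta$-system of size $k+1$ on which $\gamma$ is constant, or a copy $\mathrm{B}^\ast$ on which $\gamma$ is injective), a single $n$ at which $\gamma_n$ already exhibits the same behaviour, giving the two contradictions. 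Your K\"onig's Lemma argument on the tree of $k$-delta, rainbow-free partitions of $\binom{\mathrm{A}_n}{\mathrm{C}}$ achieves the same gluing by exploiting that only the partition induced by $\gamma_n$, not its colour labels, matters; the verification that restriction preserves conditions (i) and (ii), and that the limit partition inherits them because every witness is finite, is sound. What each approach buys: the ultrafilter version is shorter to state and dispenses with setting up a tree; your version is more effective and uses only weak K\"onig's Lemma rather than the existence of a nonprincipal ultrafilter, which is arguably preferable given the reverse-mathematics questions raised at the end of the paper. Two small points worth making explicit in a final write-up: for the $k$-bounded variant, a block of the limit partition is an increasing union of blocks of the $P_n$, each of size at most $k$, hence itself has size at most $k$; and the reason it suffices to contradict the hypothesis on $\mathrm{R}$ via copies of $\mathrm{B}$ is, as you say, that $\mathrm{B}\in\Age(\mathrm{R})=\Age(\mathrm{R}^\ast)$ for every $\mathrm{R}^\ast\in\binom{\mathrm{R}}{\mathrm{R}}$.
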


\begin{proof}
Let $(r_i;i\in \omega)$ be an $\omega$-enumeration of $R$ and for
$n\in \omega$ let $\mathrm{R}_n$ be the induced substructure of
$\mathrm{R}$ on the set $\{r_i\mid i\in n\}$. Let $\mathrm{B}\in
\Age(\mathrm{R})$ and assume for a contradiction that for every $n\in
\omega$ there is a $k$-delta colouring $\gamma_n:
\binom{\mathrm{\mathrm{R}_n}}{\mathrm{C}}\to \omega$ for which the
restriction of $\gamma_n$ to $\binom{\mathrm{B}^\ast}{\mathrm{C}} $ is
not one-to-one for every $\mathrm{B}^\ast\in
\binom{\mathrm{R}_n}{\mathrm{B}}$.  We will construct a $k$-delta
colouring $\gamma: \binom{\mathrm{R}}{\mathrm{C}}\to \omega$ so that
for all $\mathrm{B}^\ast\in \binom{\mathrm{R}}{\mathrm{B}}$ the
colouring $\gamma$ is not injective on
$\binom{\mathrm{B}^\ast}{\mathrm{C}}$; contradicting
$\mathrm{R}\underset{k\text{-delta}}{\xrightarrow{\mathrm{rainbow}}}\big(\mathrm{R}\big)^\mathrm{C}$.

Let $\mathfrak{U}$ be an ultrafilter on $\omega$ which contains all
co-finite subsets of $\omega$. For $\mathrm{C}^\ast\in
\binom{\mathrm{R}}{\mathrm{C}}$ and $\mathrm{C}^\diamond \in
\binom{\mathrm{R}}{\mathrm{C}}$ let $\mathrm{C}^\ast$ and
$\mathrm{C}^\diamond$ be equivalent, $\mathrm{C}^\ast\sim
\mathrm{C}^\diamond$, if $\{n\in \omega\mid
\gamma_n(\mathrm{C}^\ast)=\gamma_n(\mathrm{C}^\diamond)\}\in
\mathfrak{U}$.  Let $\gamma: \binom{\mathrm{R}}{\mathrm{C}}\to \omega$
be a function which is constant on every $\sim$-equivalence class and
assigns different colours to elements in different $\sim$-equivalence
classes.

We claim that the function $\gamma$ is delta. For assume that
$\mathcal{S}$ is a $\delta$-system of structures containing $k$
structures in $\binom{\mathrm{R}}{\mathrm{C}}$ and that $\gamma$ is
constant on $\mathcal{S}$. This is not possible because there is then
an $n\in \omega$ so that $\gamma_n$ is constant on $\mathcal{S}$.

Finally we must show that there is no $\mathrm{B}^\ast\in
\binom{\mathrm{R}}{\mathrm{B}}$ for which $\gamma$ is injective on
$\binom{\mathrm{B}^\ast}{\mathrm{C}}$. To do so assume otherwise there is such a
$\mathrm{B}^\ast$; but because $\binom{\mathrm{B}^\ast}{\mathrm{C}}$ is
finite, there is an $n\in\omega$ for which $\gamma_n$ is injective on
$\mathrm{B}^\ast$, a contradiction.
\end{proof}

\begin{cor}\label{cor:infinite->finite}
If $\mathrm{R}$ is a countably infinite (delta) rainbow Ramsey relational
structure, then the class of finite structures isomorphic to an induced
substructure of $\mathrm{R}$ is also (delta) rainbow Ramsey.
\end{cor}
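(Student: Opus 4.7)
The plan is to observe that this corollary is essentially an immediate consequence of the preceding Lemma~\ref{lem:infinite->finite}, which already did the compactness work. The task is just to unpack the definitions and verify that the hypotheses of the lemma are supplied by the hypothesis that $\mathrm{R}$ is (delta) rainbow Ramsey.

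First I would fix finite structures $\mathrm{B},\mathrm{C}\in\Age(\mathrm{R})$ and a natural number $k$, and recall what it means for $\Age(\mathrm{R})$ to be (delta) rainbow Ramsey: I need to produce some $\mathrm{A}\in\Age(\mathrm{R})$ with $\mathrm{A}\underset{k\text{-delta}}{\xrightarrow{\mathrm{rainbow}}}(\mathrm{B})^{\mathrm{C}}$ (respectively with $k$-bounded in place of $k$-delta). The hypothesis that $\mathrm{R}$ is (delta) rainbow Ramsey says by definition that $\mathrm{R}\underset{k\text{-delta}}{\xrightarrow{\mathrm{rainbow}}}(\mathrm{R})^{\mathrm{C}}$ holds for every finite $\mathrm{C}\in\Age(\mathrm{R})$ and every $k\in\omega$. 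In particular this hypothesis is available for our fixed $\mathrm{C}$ and $k$, which is precisely the input required by Lemma~\ref{lem:infinite->finite}.

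Applying Lemma~\ref{lem:infinite->finite} with the fixed $\mathrm{B}$, $\mathrm{C}$, and $k$ then directly produces an $\mathrm{A}\in\Age(\mathrm{R})$ such that $\mathrm{A}\underset{k\text{-delta}}{\xrightarrow{\mathrm{rainbow}}}(\mathrm{B})^{\mathrm{C}}$ holds. Since $\mathrm{B}$, $\mathrm{C}$, and $k$ were arbitrary, this establishes that $\Age(\mathrm{R})$ is delta rainbow Ramsey. The identical argument, using the $k$-bounded rather than $k$-delta hypothesis and conclusion from the second half of Lemma~\ref{lem:infinite->finite}, yields the rainbow Ramsey version. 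There is no real obstacle here; the only thing to confirm is that the lemma's statement transfers verbatim, and both versions are explicitly included in its conclusion.
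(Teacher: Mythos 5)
Your proposal is correct and matches the paper's intent exactly: the corollary is stated as an immediate consequence of Lemma~\ref{lem:infinite->finite}, and the paper supplies no further argument beyond the unpacking of definitions you describe. Quantifying over $\mathrm{B}$, $\mathrm{C}$, and $k$ and invoking the lemma for each instance is all that is needed.
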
 

\section{Conclusion}

In this paper we proved rainbow Ramsey results for simple binary
relational structures, leaving open a wide spectrum of relational
structures. Thus we can easily ask the following. 

\begin{questions}
\begin{enumerate}
\item
Which homogeneous relational structures have the rainbow Ramsey property?
\item
Are there some ages which have the rainbow Ramsey property, but the \Fra \ limit does not?
Or are the two equivalent?
\end{enumerate}
\end{questions}

In particular we do not know whether the homogeneous triangle-free
graph has the rainbow Ramsey property, and we can ask for which finite
triangle-free graph $\C$, does the countable triangle-free homogeneous
graph have the rainbow Ramsey property.

In \cite{CM}, Csima and Mileti proved that the reverse mathematical strength of the Rainbow Ramsey Theorem for pairs of natural numbers is strictly weaker over RCA$_0$ than Ramsey's Theorem.  
We may ask similar questions for the Rado graph and other homogeneous relational structures, replacing Ramsey's Theorem with the existence of  canonical partitions in the sense of Corollary \ref{cor:beginbas}.

\begin{questions}
\begin{enumerate}   
\item
For homogeneous binary simple structures, is the statement of Corollary \ref{cor:beginbas} stronger over RCA$_0$  than the statement of Theorem \ref{thm:final}? 
\item
In particular, what is the  reverse mathematical strength of the Rado graph being rainbow Ramsey for $k$-bounded colorings of edges; 
 how does its strength compare with those of Ramsey's Theorem and  the Rainbow Ramsey Theorem for $k$-bounded colorings of pairs of natural numbers?
\end{enumerate}
\end{questions}

\bibliographystyle{amsplain}
\bibliography{references}

\providecommand{\bysame}{\leavevmode\hbox to3em{\hrulefill}\thinspace}
\providecommand{\MR}{\relax\ifhmode\unskip\space\fi MR }
\providecommand{\MRhref}[2]{%
  \href{http://www.ams.org/mathscinet-getitem?mr=#1}{#2}
}
\providecommand{\href}[2]{#2}
\begin{thebibliography}{10}

\bibitem{Abraham/Cummings/Smyth07}
U.~Abraham, J.~Cummings, and C.~Smyth, \emph{Some results in polychromatic
  {R}amsey theory}, Journal of Symbolic Logic \textbf{72} (2007), no.~3,
  865--896.

\bibitem{CM}
B.~Csima and J.~Mileti, \emph{The strength of the rainbow {R}amsey theorem},
  Journal of Symbolic Logic \textbf{74} (2009), no.~4, 1310--1324.

\bibitem{F}
R.~Fra{\"{i}}ss{\'{e}}, \emph{Theory of {R}elations, {R}evised edition. {W}ith
  an appendix by {N}orbert {S}auer}, vol. 145, North-Holland, 2000.

\bibitem{FMO}
S.~Fujita, C.~Magnant, and K.~Ozeki, \emph{Rainbow {G}eneralizations of
  {R}amsey {T}heory: {A} {S}urvey}, Graphs and Combinatorics \textbf{26}
  (2010), 1--30.

\bibitem{HT}
G.~Hahn and C.~Thomassen, \emph{Path and cycle sub-{R}amsey numbers and an
  edge-colouring conjecture}, Discrete Mathematics \textbf{62} (1986), 29--33.

\bibitem{KL}
M.~Kano and X.~Li, \emph{Monochromatic and heterochromatic subgraphs in
  edge-colored graphs, a survey}, Graphs Combinatorics \textbf{24} (2008),
  no.~4, 237--263.

\bibitem{LVW}
H.~Lefmann, V.~R{\"{o}}dl, and B.~Wysocka, \emph{Multicolored subsets in
  colored hypergraphs}, Journal of Combinatorial Theory, Series A (1996),
  no.~74, 209--248.

\bibitem{NR}
J.~Ne{\v{s}}et{\v{r}}il and V.~R{\"{o}}dl, \emph{Partitions of finite
  relational and set systems}, Journal of Combinatorial Theory, Series A
  \textbf{22} (1977), 289--312.

\bibitem{S}
N.~Sauer, \emph{Coloring subgraphs of the {R}ado graph}, Combinatorica
  \textbf{26} (2006), no.~2, 231--253.

\bibitem{Todorcevic83}
S.~Todorcevic, \emph{Forcing positive partition relations}, Transactions of the
  American Mathematical Society \textbf{280} (1983), no.~2, 703--720.

\end{thebibliography}

\end{document}